\theoremstyle{plain}
\numberwithin{figure}{section}
\newcommand{\Addresses}{{
  \bigskip
  \footnotesize
   James Farre, \textsc{Department of Mathematics, Yale University} \par\nopagebreak
  \textit{E-mail address}: \texttt{james.farre@yale.edu}
  
    \textsc{Mathematisches Institut, Ruprecht-Karls Universit\"at Heidelberg}\par\nopagebreak
    \textit{E-mail address}:
  \texttt{jfarre@mathi.uni-heidelberg.de}
  
    \noindent Franco Vargas Pallete, \textsc{Department of Mathematics, Yale University}\par\nopagebreak
  \textit{E-mail address}: \texttt{franco.vargaspallete@yale.edu}
  }}
\newcommand{\HH}{\mathbb{H}}
\newcommand{\CC}{\mathbb{C}}
\newcommand{\ZZ}{\mathbb{Z}}
\newcommand{\RR}{\mathbb{R}}
\DeclareMathOperator{\Area}{Area}
\newtheorem{theorem}{Theorem}[section]
\newtheorem{lem}[theorem]{Lemma}
\newtheorem{remark}[theorem]{Remark}
\newtheorem{prop}[theorem]{Proposition}
\newtheorem{cor}[theorem]{Corollary}
\begin{document}
\title{Minimal area surfaces and fibered hyperbolic $3$-manifolds}
\author{James Farre}
\author{Franco Vargas Pallete}
\thanks{J. Farre's research was supported by NSF grant DMS-1902896. F. Vargas Pallete's research was supported by NSF grant DMS-2001997. This work was also supported by the National Science Foundation under Grant No. DMS-1928930, while Farre and Vargas Pallete participated in a program hosted by the Mathematical Sciences Research Institute in Berkeley, California, during the Fall 2020 semester.}
\maketitle
\begin{abstract}
By work of Uhlenbeck, the largest principal curvature of any least area fiber of a hyperbolic $3$-manifold fibering over the circle is bounded below by one. 
We give a short argument to show that, along certain families of fibered hyperbolic $3$-manifolds, there is a uniform lower bound for the maximum principal curvatures of a least area minimal surface which is greater than one.
\end{abstract}

\section{Introduction}

Since Thurston's work on surfaces, the study of hyperbolic manifolds in dimensions two and three has seen an explosion of progress.  Abundant classes of surfaces, such as  pleated surfaces, can be used to probe the geometry of a hyperbolic $3$-manifold at nearly any point.  In this note, we are interested in geometric aspects of \emph{minimal} surfaces in hyperbolic $3$-manifolds, which are far less ubiquitous than pleated surfaces and which  subtly influence the geometry of the space that they inhabit.  

In her seminal work, Uhlenbeck \cite{Uhlenbeck83} investigated minimal immersions of closed surfaces into complete hyperbolic $3$-manifolds with principal curvatures bounded in absolute value by one.
The covering space associated to such a surface is remarkably well behaved in many aspects; the inclusion of the minimal immersion is an incompressible least area minimal embedding, and no other closed minimal surface of any kind can be found in this manifold. 
It has been unclear, geometrically, how far the class of analytically defined \emph{almost-Fuchsian} manifolds, i.e. those which deformation retract onto a minimal surface with principal curvatures bounded by one, could be from Fuchsian.  Very recently, Huang and Lowe \cite{HuangLowe21} proved that the closure of the almost-Fuchsian locus is contained in the well-studied \emph{quasi-Fuchsian} space.  

Following ideas of Hass \cite{Hass15}, we observe that a surface minimizing area in its homotopy class cannot penetrate too deeply into regions of small injectivity radius in a hyperbolic $3$-manifold. In particular, many sequences of incompressible least area minimal surfaces of a fixed topological type are \emph{uniformly thick}, and so minimal limits with the same topology are easily extracted.  

Using this geometric control, we bound the maximum principal curvatures of certain families of minimal surfaces in hyperbolic mapping tori strictly away from one. As the Virtual Fibering Theorem of Agol and Wise asserts that every closed hyperbolic $3$-manifold has a finite cover that fibers over the circle, our applications address the behavior of an important class of least area minimal immersions in closed hyperbolic $3$-manifolds.
%This question motivated the following type of problem: if a family of minimal surfaces are "reasonably far" from being Fuchsian (i.e. totally geodesic) then there should be a uniform lower bound for the maximum principal curvature. 

\begin{theorem}\label{thm:intro1}
Given a hyperbolic $3$-manifold fibering over the circle with monodromy $\psi:\Sigma\rightarrow\Sigma$ and a simple curve $\alpha \subset \Sigma$ such that $i(\alpha,\psi(\alpha)) > 5|\chi(\Sigma)|$, any sequence of hyperbolic mapping tori that drills $\alpha$ admits a uniform lower bound $\mu>1$ for the largest principal curvature in the area minimizer in the homotopy class of the fiber.
\end{theorem}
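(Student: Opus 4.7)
The plan is to argue by contradiction, combining the uniform thickness principle discussed in the introduction with a Gauss--Bonnet-type rigidity for minimal surfaces whose principal curvatures are bounded by one. Suppose no such $\mu > 1$ exists: then we may extract a sequence $M_n$ of hyperbolic mapping tori drilling $\alpha$ with least area fibers $S_n\subset M_n$ whose maximum principal curvatures $\mu_n$ satisfy $\mu_n\to 1^+$, and choose basepoints $p_n\in S_n$ at which $\mu_n$ is (nearly) realized.

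First I would establish geometric convergence. The hypothesis that $M_n$ drills $\alpha$ means each $M_n$ contains a distinguished short geodesic (the core of the filled solid torus). The uniform thickness observation (following Hass) implies the $S_n$ do not penetrate deeply into the Margulis tubes around those short geodesics, so the injectivity radius of $M_n$ at $p_n$ is bounded below. Passing to a subsequence, the pointed triples $(M_n, S_n, p_n)$ converge smoothly on compact sets to $(M_\infty, S_\infty, p_\infty)$, where $M_\infty=M_\psi\setminus\alpha$ is the cusped hyperbolic $3$-manifold obtained by drilling $\alpha$ from the mapping torus of $\psi$, and $S_\infty\subset M_\infty$ is a properly immersed minimal surface whose principal curvatures are at most one in absolute value, with equality at $p_\infty$.

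Next I would unpack the topology of $S_\infty$. Each disk of $S_n$ that slips into a Margulis tube is lost in the geometric limit and becomes an end of $S_\infty$ entering the cusp of $M_\infty$. I would use the hypothesis $i(\alpha,\psi(\alpha)) > 5|\chi(\Sigma)|$ to give a topological lower bound on the number of such ends, roughly by arguing that the fiber class in $M_n$ must meet the filled core at least $i(\alpha,\psi(\alpha))$ times (the monodromy image $\psi(\alpha)$ measures how far $\alpha$ is from being carried along the fibration, and hence how often any representative of the fiber must intersect the drilled curve). Then I would derive the contradiction using the Gauss equation for minimal surfaces,
\[ \mathrm{Area}(S_\infty) + \int_{S_\infty} \lambda^2\,dA \;=\; 2\pi\,|\chi(S_\infty)|. \]
The total area of $S_\infty$ is bounded above by $2\pi|\chi(\Sigma)|$ (inherited from the $S_n$), while each end near the cusp is asymptotic to a horosphere (which has $\lambda=1$, forced by a maximum principle comparison under the bound $\lambda\le 1$) and contributes a definite amount to the area. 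Balancing these estimates produces an upper bound of the form $5|\chi(\Sigma)|$ on the number of ends, contradicting the lower bound supplied by the hypothesis.

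The main obstacle will be the topological step: translating the intersection hypothesis in $\Sigma$ into a sharp lower bound on the number of ends of $S_\infty$ in the cusp of $M_\infty$, and calibrating the horospherical contribution of each end finely enough to produce the factor of $5$ on the nose. This requires careful tracking of the fiber class through the geometric limit and a quantitative comparison of $S_\infty$ with horospheres in the cusp.
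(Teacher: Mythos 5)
Your setup is right: argue by contradiction, use the thickness result to get injectivity radius bounds, and pass to a geometric limit $\Sigma_\infty$ in the drilled manifold $M_\infty$ with all principal curvatures $\le 1$.  But the scenario you then describe for the limit surface is wrong, and it sends the rest of the argument in a direction that cannot work.

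You posit that pieces of $S_n$ ``slip into Margulis tubes'' and become non-compact ends of $S_\infty$ asymptotic to horospheres in the cusp of $M_\infty$, and that the contradiction comes from balancing the number of ends against an area bound from the Gauss equation. This is exactly what the thickness principle (Proposition~\ref{prop:nodeepness} / Theorem~\ref{thm:cusprepelling}) rules out: the area minimizers stay a \emph{uniform} distance from the thin parts, so $\Sigma_\infty$ is a \emph{closed} surface of the same genus as $\Sigma$, with no ends at all. There is no contribution to the Gauss equation from ends, and no counting argument on the number of ends to run. Relatedly, an end ``asymptotic to a horosphere'' cannot occur for a minimal surface in the first place, since a horosphere is umbilic with principal curvatures $(1,1)$ while a minimal surface has opposite curvatures $(\lambda,-\lambda)$; so this local model is also off. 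Finally, the proposed topological lower bound on ends coming from $i(\alpha,\psi(\alpha))$ is not justified: the drilled curve is isotopic into the fiber and homologically disjoint from it, so there is no forced intersection with the fiber class.

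What the paper actually does from the point of geometric convergence onward is quite different. It passes to the cover $\widetilde{M}$ of $M_\infty$ associated to $\pi_1(\Sigma_\infty)$, which is homeomorphic to $\Sigma\times\mathbb{R}$ and has two rank-$1$ parabolic cusps corresponding to $\alpha$ and $\psi(\alpha)$, one on each side of $\Sigma_\infty$. The key geometric input is Proposition~\ref{prop:parabolic_curve}: because $|k_{1,2}|\le 1$, a surface can touch an outer-tangent horosphere only along a horocyclic line of curvature. Applied to the horospheres at the two parabolic fixed points, this shows the $\Sigma_\infty$-geodesic representatives of $\alpha$ and of $\psi(\alpha)$ are lines of curvature with curvatures of opposite sign, hence meet each other orthogonally. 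Then the combinatorial Lemma~\ref{lem:intersections} (via Euler characteristic and Gauss--Bonnet on the cell decomposition of the filling subsurface, using the curvature pinching $-2\le\kappa\le -1$) caps the number of orthogonal intersections at $5|\chi(\Sigma)|$. That contradicts $i(\alpha,\psi(\alpha))>5|\chi(\Sigma)|$. The ``lines of curvature meet orthogonally'' mechanism, together with the Euler characteristic count of orthogonal intersections, is the missing idea in your proposal.
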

The examples of Theorem \ref{thm:intro1} are very common.  One need only pass to a power of $\psi$ to ensure that the condition on the intersection of a curve and its image under the monodromy.
We can also bound the principal curvature of area minimizers homotopic to the fiber if a pair of intersecting curves become short along the sequence.
\begin{theorem}\label{thm:intro2}
Given a hyperbolic mapping torus and two intersecting simple curves $\alpha$ and $\beta$ in the fiber, for any sequence of mapping tori that drills $\alpha$ and makes $\beta$ sufficiently short, there is a uniform lower bound $\mu>1$ for the largest principal curvature in the area minimizer in the homotopy class of the fiber.
\end{theorem}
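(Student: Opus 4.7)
\smallskip
\noindent\textbf{Proof plan.} The starting point is the identity
$$\Area(\Sigma)+\tfrac{1}{2}\int_\Sigma |II|^2\,dA \;=\; 2\pi|\chi(\Sigma)|,$$
valid for any closed immersed minimal surface $\Sigma$ in a hyperbolic $3$-manifold (Gauss equation plus Gauss--Bonnet, using that the principal curvatures of a minimal surface are $\pm \lambda$). In particular, if $|\lambda|\le\mu$ pointwise, then $|II|^2\le 2\mu^2$ and the identity forces $\Area(\Sigma)\ge \frac{2\pi|\chi(\Sigma)|}{1+\mu^2}$. My plan is to argue by contradiction: assume there is a sequence of hyperbolic mapping tori $M_n$ drilling $\alpha$ with $\ell_{M_n}(\beta)\to 0$, along which the area minimizer $\Sigma_n$ in the fiber homotopy class has maximum principal curvature $\mu_n \to 1$.

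The pleated-surface upper bound $\Area(\Sigma_n)\le 2\pi|\chi(\Sigma)|$ (using a pleated representative realizing $\beta$ as a leaf of the pleating lamination), together with the uniform bound $|II_{\Sigma_n}|\le \sqrt{2}\mu_n$ and the Hass-type thickness announced in the introduction (which keeps $\Sigma_n$ uniformly out of the cores of the Margulis tubes of $\alpha$ and $\beta$), give the $\Sigma_n$ uniformly bounded geometry. Choosing base points in the thick part and passing to a subsequence, I would extract a smooth geometric limit: a complete minimal surface $\Sigma_\infty$ with $|II_{\Sigma_\infty}|\le\sqrt 2$ in the geometric limit $M_\infty$ of the $M_n$, realizing the fiber homotopy class (possibly with cusps at $\alpha$). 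The limit $M_\infty$ carries a rank-$2$ cusp at the drilled $\alpha$ and a short closed geodesic representing $\beta$, and $i(\alpha,\beta)>0$ on the fiber.

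The contradiction should then come from the quasi-Fuchsian rigidity underlying Uhlenbeck's theorem: such a $\Sigma_\infty$ would force the cover of $M_\infty$ associated to $\pi_1(\Sigma_\infty)$ to be quasi-Fuchsian (possibly with parabolics), which is incompatible with the cover simultaneously having a parabolic element representing $\alpha$ and a short closed geodesic representing $\beta$ whose relation on the fiber is $i(\alpha,\beta)>0$. The main obstacle I anticipate is making the contradiction in $M_\infty$ rigorous---in particular, extending the strict-inequality case of Uhlenbeck's rigidity to the borderline $|II|\le\sqrt 2$ on the possibly cusped limit $\Sigma_\infty$, and pinning down exactly how $i(\alpha,\beta)>0$ obstructs the quasi-Fuchsian structure in the presence of the $\alpha$-cusp and the short $\beta$. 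An alternative route bypassing the rigidity step is to build an explicit area-reducing competitor for the fiber class in $M_n$: excise the hyperbolic collar of $\beta$ from a pleated surface realizing $\beta$ and cap off with a low-area annulus deep inside the Margulis tube of $\beta$, using the smallness of $\ell_{M_n}(\beta)$ to drop the area strictly below $\pi|\chi(\Sigma)|$; via the integral identity this then forces a principal curvature strictly greater than one on $\Sigma_n$.
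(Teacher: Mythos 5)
Your framework — Hass-type thickness (Proposition~\ref{prop:nodeepness} / Theorem~\ref{thm:cusprepelling}) to keep the area minimizers in the thick part, bounded geometry plus the Gauss--Bonnet identity to extract a $C^\infty$ limit $\Sigma_\infty$ with $|k_{1,2}|\le 1$ in a geometric limit $M_\infty$ — is correct and is indeed how the paper sets up the contradiction. The gap is in the contradiction step itself. You claim that a quasi-Fuchsian (or boundary-quasi-Fuchsian) cover of $M_\infty$ associated to $\pi_1(\Sigma_\infty)$ is incompatible with simultaneously containing a rank-$1$ parabolic representing $\alpha$ and a short closed geodesic representing $\beta$ with $i(\alpha,\beta)>0$. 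This is not true as stated: a geometrically finite, boundary-quasi-Fuchsian manifold can perfectly well have one accidental parabolic $\alpha$ on one conformal boundary and an arbitrarily short geodesic $\beta$; the intersection number on the fiber does not by itself obstruct the almost-Fuchsian structure. (Indeed in Theorem~\ref{thm:drilling_a_curve} the contradiction requires two curves that become parabolic \emph{on opposite conformal boundaries}, forcing their geodesic representatives to both be lines of curvature and hence meet orthogonally, and then Lemma~\ref{lem:intersections} to make orthogonality impossible; none of that machinery is available when only $\alpha$ is drilled.)

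The idea you are missing is that the short $\beta$ is used to manufacture a \emph{second} local area minimizer in the same homotopy class, so that Uhlenbeck's uniqueness statement (Theorem~\ref{thm:AF}\eqref{item:maxprinciple}: a minimal surface with $|k_{1,2}|\le 1$ is the \emph{only} closed minimal surface in its $\pi_1$-cover) yields the contradiction. Concretely: drill both $\alpha$ and $\beta$ to get $M_0$. Because $\alpha$ and $\beta$ intersect on the fiber, the two copies $\Sigma\times\{1/4\}$ and $\Sigma\times\{3/4\}$ are in \emph{different} homotopy classes in $M_0$ (their cyclic order relative to the two cusps differs), so Freedman--Hass--Scott gives two distinct embedded area minimizers in $M_0$. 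Pass back to $M_n$ (Dehn filling $\beta$): by Theorem~\ref{thm:minimizersurvives}, both local minimizers persist for $n$ large, but now they are homotopic, since filling $\beta$ identifies the two classes. Two distinct local minimizers in the same homotopy class forces each to have a point of principal curvature $>1$, by Uhlenbeck's uniqueness. Finally, any limit of the $\Sigma_k$ as $C_{\psi_k}\to M_n$ is one of these, so by $C^\infty$ convergence the bound is uniform along the sequence. Your alternative ``excise-and-cap'' competitor is closer in spirit to something quantitative, but as written it is also not fleshed out enough to produce the strict area deficit you would need; the two-minimizer argument avoids any such estimate.
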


See Section \ref{sec:mapping torus} for precise statements.  
We note that  Huang and Lowe \cite{HuangLowe21} prove a more general version of our applications (in fact, they solve the problem  that originally motivated this investigation). Nevertheless, our results are ``hands on'' in nature and provide information about how curves of short (complex) length in a hyperbolic $3$-manifold affect least area minimal immersions.  Our main technical result provides a short argument for an improved version of a main result of \cite{HuangWang}.  This improvement allows us to easily construct the examples given in Theorems \ref{thm:intro1} and \ref{thm:intro2}. % is  we provide is similar in spirit to the main res    %and are essentially orthogonal to their (powerful) analytical arguments.  %that we prove our applications by different methods than theirs, without proving their main result.

The article is organized as follows. In Section \ref{sec:minimizers} we show that area minimizers cannot go arbitrarily deep into thin parts of controlled shape. In Section \ref{sec:AF} we discuss Uhlenbeck's work on almost-Fuchsian manifolds.  We show, by refocusing perpendiculars at infinity, that a least area homotopy equivalence with principal curvatures between $-1$ and $1$ in a manifold with parabolic cusps contains horocyclic segments, which are closed geodesics in the induced metric. Section \ref{sec:mapping torus} contains our main applications regarding area minimizers in sequences of mapping tori. In Appendix \ref{appendix} we present a known result explaining how to rescue a local area-minimizer from a geometric limit back to a sequence of approximations. 

\section*{Acknowledgments} The second author would like to thank Marco A. M. Guaraco, Joel Hass, and Vanderson Lima for helpful conversations about this work.  Both authors would like to thank Zeno Huang for insightful comments and questions related to this work.

\section{Area minimizing minimal surfaces and short curves}\label{sec:minimizers}

Fix a constant $\epsilon_3$ smaller than the $3$-dimensional Margulis constant.  If $\gamma\subset M$ is a closed geodesic with (real) length smaller than $\epsilon_3$, then $\gamma$ is the core geodesic of a \emph{Margulis tube} $\mathbb T(\epsilon_3)$, the set of all points near $\gamma$ through which there is a non-contractible loop of length at most $2\epsilon_3$, foliated by flat tori $T_r$ at distance $r\le r_0$ from $\gamma$.
We let $\lambda$ denote the \emph{complex length} of $\gamma$ whose real part is length and whose imaginary part lies in $[0,2\pi)$, measuring the twisting angle.
By \cite{BrooksMatelski82}, there is a constant $C = C(\epsilon_3)$  that  satisfies
\begin{equation}\label{eqn:tube_radius}
e^{r_0}|\lambda|\geq C^{-1};
\end{equation}
we call $r_0$ the \emph{radius} of $\mathbb T(\epsilon_3)$. 

The following technical result is one of the main tools of the present article, namely that $\pi_1$-injective area minimizers do not penetrate too deeply into Margulis tubes about short curves. 
Here ``deep" depends on the shape of the boundary of the tube, so that our uniform bound on distance from the area minimizer to the boundary becomes interesting as the radius tends to infinity. This was done originally by Hass in \cite{Hass15} for rank-2 cusps; we follow the same ideas for tubes with a short geodesic core.
\begin{prop}\label{prop:nodeepness}
%Let $\epsilon_3>\epsilon>0$ be a small enough constant. Then there exist constants $\delta>0,\,K>1$ so that for any given $\pi_1$-injective area minimizer surface $\Sigma$ in a compact hyperbolic $3$-manifold $M$ and any given $\epsilon$-Margulis tube $T$ in $M$ whose loxodromic core has complex length $\lambda$ satisfying $Re(\lambda)<\delta$, $K^{-1}\leq Im(\lambda)^2/Re(\lambda) \leq K$, we have that $\Sigma\cap T = \emptyset$.\\
Given $K>1$, there exist constants $\delta(K)>0$ and  $d(K)>0$ such that for any $\pi_1$-injective area minimizing surface $\Sigma$ in a complete hyperbolic $3$-manifold $M$ and $\gamma$ a geodesic in $M$ with complex length $\lambda$ satisfying $Re(\lambda)<\delta$ and  $K^{-1}\leq Im(\lambda)^2/Re(\lambda)$, then we have that $\Sigma\cap M^{<\epsilon_3}$ is within distance $2d$ of $\partial M^{<\epsilon_3}$.
\end{prop}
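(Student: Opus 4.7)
My plan is to adapt the argument of Hass \cite{Hass15} for rank-2 cusps to Margulis tubes around short closed geodesics, arguing by contradiction. Suppose $\Sigma$ reached a point at distance more than $2d$ from $\partial M^{<\epsilon_3}$; I will produce a homotopic replacement of $\Sigma$ of strictly smaller area, contradicting minimality. To set up notation, I first parametrize the tube $\mathbb{T}(\epsilon_3)$ around $\gamma$ by cylindrical coordinates $(r,\theta,t)$ with metric $dr^2+\sinh^2 r\, d\theta^2+\cosh^2 r\,dt^2$ and deck action $(r,\theta,t)\sim(r,\theta+Im(\lambda),t+Re(\lambda))$. The level torus $T_r$ is flat, with area $2\pi Re(\lambda)\sinh r\cosh r$ and lattice generated by $(2\pi\sinh r,0)$ and $(Im(\lambda)\sinh r,Re(\lambda)\cosh r)$. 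The hypothesis $Im(\lambda)^2/Re(\lambda)\geq K^{-1}$ keeps this lattice from becoming arbitrarily eccentric and yields, by a routine computation, a lower bound on the systole $\mathrm{sys}(T_r)$ which is monotone increasing in $r$.

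Next I set $L(s):=\mathrm{length}(\Sigma\cap T_s)$ and $A(s):=\Area(\Sigma\cap\{r\leq s\})$. The coarea formula gives $A(s)\geq \int_{r_{\min}}^s L(u)\,du$, where $r_{\min}$ is the smallest value of $r$ attained by $\Sigma\cap\mathbb{T}(\epsilon_3)$. Assuming $r_0-r_{\min}>2d$, and using the Brooks--Matelski estimate \eqref{eqn:tube_radius} to make $r_0$ as large as needed by shrinking $\delta(K)$, I average $L$ over the middle subinterval $[r_{\min}+d,\, r_0-d]$ to produce some slice $s^*$ with $L(s^*)\leq A(s^*)/d$. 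The $\pi_1$-injectivity of $\Sigma$ in $M$, together with $\pi_1(\mathbb{T}(\epsilon_3))=\ZZ\langle\gamma\rangle$, implies that every component of $\Sigma\cap T_{s^*}$ is either null-homotopic on $T_{s^*}$, a meridional power (null-homotopic in $\mathbb{T}(\epsilon_3)$ but not on $T_{s^*}$), or a longitudinal power (a nontrivial power of $\gamma$). For $d$ large enough relative to $K$, the slice length $L(s^*)$ falls below $\mathrm{sys}(T_{s^*})$, ruling out the essential types and leaving only null-homotopic components.

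The replacement step is then to cap off $\Sigma\cap\{r\leq s^*\}$ by the union of flat disks on $T_{s^*}$ bounded by the null-homotopic components of $\Sigma\cap T_{s^*}$, whose total area is bounded by $L(s^*)^2/(4\pi)$ via the Euclidean isoperimetric inequality on the flat torus. The resulting surface is homotopic to $\Sigma$, and its area is smaller by at least $A(s^*)-L(s^*)^2/(4\pi)$. To produce the strict inequality I combine the coarea lower bound $A(s^*)\geq\int_{r_{\min}}^{s^*} L(u)\,du$ with the fact that $L(u)\geq \mathrm{sys}(T_u)$ on most $u\in(r_{\min},r_0)$, since $\Sigma$ must intersect each such slice non-trivially in order to have reached $r_{\min}$. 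Taking $d=d(K)$ large then forces $A(s^*)>L(s^*)^2/(4\pi)$, the desired contradiction. The main obstacle in this plan is precisely the topological control of $\Sigma\cap T_{s^*}$: the hypothesis $Im(\lambda)^2/Re(\lambda)\geq K^{-1}$ is used exactly to prevent $\mathrm{sys}(T_r)$ from collapsing in either lattice direction, ruling out essential short loops on the slicing torus so that the planar isoperimetric replacement goes through unobstructed.
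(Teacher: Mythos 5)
Your framework—coarea formula, systole control from the lattice shape, and capping off by Euclidean isoperimetric disks on a slice $T_{s^*}$ where the total intersection length is small—is close in spirit to the paper's proof, which also works via coarea estimates on the flat tori $T_r$. But there is a genuine gap in the step that produces the contradiction $A(s^*) > L(s^*)^2/(4\pi)$. You justify the lower bound on $A(s^*)$ by writing ``$L(u)\ge \mathrm{sys}(T_u)$ on most $u$, since $\Sigma$ must intersect each such slice non-trivially in order to have reached $r_{\min}$.'' But ``$\Sigma\cap T_u\neq\emptyset$'' only gives $L(u)>0$; it does not force $L(u)\ge\mathrm{sys}(T_u)$ unless some component of $\Sigma\cap T_u$ is \emph{essential on $T_u$}. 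A priori $\Sigma$ could reach great depth via a small disk whose slices are null-homotopic curves of tiny total length, and then both $A(s^*)$ and $L(s^*)^2/(4\pi)$ are tiny with no evident contradiction. To rule this out, the paper invokes a separate maximum-principle argument using outer-tangent horoballs: a minimal disk bounded by a short curve on $T_u$ is trapped in a horoball of comparable size, and hence cannot descend far below $T_u$. This converts ``penetrates deep'' into ``boundary curves at every slice must be long,'' which is exactly the input your coarea lower bound needs. Without that step, the replacement argument does not close.

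There is a second, smaller gap: you treat the disk and non-disk components of $\Sigma\cap\mathbb{T}(\epsilon_3)$ uniformly, but capping off a non-disk component with disks on $T_{s^*}$ can change the homeomorphism type of the surface (and in the essential case it amounts to compressing an annulus). The paper handles these separately: Lemma~\ref{lemma:disk} handles disk components by the horoball/coarea argument; for non-disk components it first reduces null-homotopic boundary curves to Lemma~\ref{lemma:disk} using incompressibility and the maximum principle, and then treats the remaining essential boundary curves by an area comparison showing they would bound essential annuli that reduce area. Your isoperimetric cap is a reasonable alternative to the paper's comparison of $\Area(D)$ against $\Area(T_b)$, but only once the topological control on the boundary curves and the lower bound on $A(s^*)$ are in place.
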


\begin{proof}

The foliation of a compact component of the $\epsilon_3$-thin part $M^{<\epsilon_3}$ by tori equidistant from the core curve is mean-convex, so we can assume without loss of generality that $\Sigma\cap\partial M^{<\epsilon_3}\neq\emptyset$.

By \cite[Section 3.2]{Minsky10} the boundary $T_r$ of the $r$-neighbourhood around $\gamma$ is a flat torus isometric to $\CC/t_r(\ZZ+\omega_r\ZZ)$, where the parameters $t_r>0, \omega_r\in\CC$ are given by the equations
\[    t_r|\omega_r| = 2\pi\sinh(r)\]
and
\[
    it_r|\omega_r|/\omega_r = Re(\lambda)\cosh(r) + iIm(\lambda)\sinh(r).\]
From this is not hard to see that the area $A_r$ of $T_r$ is given by
\[    A_r = t_r^2|Im(\omega_r)| = 2\pi Re(\lambda)\cosh(r)\sinh(r),
\]
while the injectivity radius $inj_r$ of $T_r$ is bounded by
\[    inj_r \geq \min\lbrace t_r, t_r.Im(\omega_r) \rbrace = \min\lbrace t_r, \frac{2\pi Re(\lambda)\cosh(r)\sinh(r)}{t_r} \rbrace.
\]

Since $|\lambda|\sinh(r) \leq t_r \leq |\lambda|\cosh(r)$ we can further simplify this inequality to
\[    inj_r \geq \min \lbrace |\lambda|\sinh(r), \frac{2\pi Re(\lambda) \sinh(r)}{|\lambda|} \rbrace = \frac{\sinh(r)}{|\lambda|}.\min \lbrace |\lambda|^2, 2\pi Re(\lambda) \rbrace \geq \frac{Re(\lambda)\sinh(r)}{K|\lambda|},
\]
where in the last inequality we have used that $Im(\lambda)^2 \geq K^{-1}.Re(\lambda)$.

As for upper bounds, we have

\begin{equation}\label{eq:upperboundinj}
    inj_r \leq \sqrt{\frac{A_r}{\pi}} = \sqrt{2Re(\lambda)\cosh(r)\sinh(r)}.
\end{equation}

%The $\epsilon$-Margulis tube $\mathbb T(\epsilon)$ around a curve of length $\ell<\delta_0$ corresponds to the $r$-neighbourhood around that curve for some $r$. 
%We let $r_0$ be such that the tube $\mathbb T(\epsilon_3)$ is the $r_0$ neighborhood of the core geodesic.
%Let then $r_0$ be the radius corresponding to $\epsilon_3$

Observe then that for $r_0 > b\ge1.42$, we have
\begin{equation}\label{eq:coareabound}
    \begin{split}
        \int_{b-1}^b 2.inj_r dr &\geq \int_{b-1}^b 2\frac{Re(\lambda)\sinh(r)}{K|\lambda|}  dr\\
        & = \frac{2Re(\lambda)}{K|\lambda|}(\cosh(b)-\cosh(b-1))\\
        & \geq \frac{Re(\lambda)\cosh(b))}{K|\lambda|},
    \end{split}
\end{equation}
where $b\ge1.42$ ensures that the last inequality holds.
With this we can prove that the disk components of $\Sigma\cap \mathbb T(\epsilon_3)$ cannot go deep into the thin part.

\begin{lem}\label{lemma:disk}
Suppose $\Sigma \cap \mathbb T(\epsilon_3)$ is a union of disks. Then as long as $Re(\lambda)$ is small enough, there is $d=d(K)>0$ such that $\Sigma$ stays within distance $d$ from $\partial \mathbb T(\epsilon_3)$.
\end{lem}
\begin{proof}
Denote by $D$ one of the components of $\Sigma \cap \mathbb T(\epsilon_3)$ and let $\ell$ be the length of $\partial D$. Lift the disk $D$ to $\HH^3$. 

First, we observe that as long as $Re(\lambda)$ is small enough, then $\partial D$ cannot be isotopic to the meridian curve of $\partial \mathbb T(\epsilon_3)$.  Indeed, $D$ would be a minimal disk passing through the core geodesic of a tube with very large radius.  A monotonicity argument of Anderson \cite{Anderson82} then guarantees that the area of the disk is at least that of a geodesic disk in the hyperbolic plane of the same radius.  However, since the Gauss curvature of $\Sigma$ is bounded above by $-1$, the area of $\Sigma$ is bounded above by a constant depending only on the topology of $\Sigma$, so the area of this disk would be too large compared to the total area of $\Sigma$.  

The boundary of an $r$-neighborhood of the universal cover of a tube (with core passing from $0$ to $\infty$, say) in the upper half space model for $\HH^3$ is a cone making very small angle with with the complex plane at $0$.  Take a point $x_0\in\partial D$ and a horoball outer-tangent to the boundary of the universal cover of $\mathbb T(\epsilon_3)$ at $x$.  The hyperbolic diameter of $\partial D$ is at most $\ell/2$.  After enlarging our horoball to include the horospheres centered at the same point distance at most $\ell$ away, we find a horoball $B$ such that $\partial D \subset B$.  
Indeed, the boundary of our cone is nearly parallel to the complex plane, so this follows from continuity by looking at the intersection pattern between a horoball tangent to the plane and a horosphere centered at in point at infinity in the half space model for $\HH^3$.  
If $D$ were not contained in $B$, then we could enlarge $B$ further to a horoball $B'$ which contains $D$, and whose closure is tangent to $D$ at a point.  However, the mean curvature of $D$ is $0$ and the mean curvature of the boundary of $B'$ is $1$, which contradicts the maximum principle; thus  $D\subset B$.

Recall that $\mathbb T(\epsilon_3)$ is the $r_0$-neighborhood of its core curve $\gamma$, so that $T_{r_0} = \partial \mathbb T(\epsilon_3)$.  We claim that $r_0\le \log\frac{1}{|\lambda|} + \log C_1$ for some $C_1>0$. Indeed, we can take a point $x$ at distance $r_0$ from the core geodesic so that the geodesic segment joining $x$ and $\gamma.x$ has length $2\epsilon_3$. Denote then $y$ the point at distance $r_0$ with the same projection as $\gamma.x$ that is in the same plane as $x$ and the axis. Then $d(x,y)\approx e^{r_0}Re(\lambda)$, and  $d(y,\gamma.x)\approx e^{r_0}Im(\lambda)$. 
There is an upper bound \cite[Corollary A.2]{HuangWang}, depending only on the topology of $\Sigma$, for the ratio \[\frac{Im(\lambda)^2}{Re(\lambda)}\le 2\pi|\chi(\Sigma)| \sqrt{\frac{4\pi}{\sqrt{3}}}.\]
Using this bound, our assumption $K^{-1}\leq Im(\lambda)^2/Re(\lambda)$, and the triangle inequality can find $C_1$ with $e^{r_0}|\lambda| \leq C_1\epsilon_3$.  The claim follows after taking logarithms. 

Together with (\ref{eq:upperboundinj}), this implies that $inj_{r_0}$ is bounded by a constant depending on $K$. If $\ell<2inj_{r_0}$, then the maximum principle argument shows that  $D$ stays at distance $\ell<2inj_{r_0}$ from $\partial \mathbb T(\epsilon)$, from which the lemma follows. 

%Note that $inj_{r_0} = inj (T_{r_0}) \le \epsilon$, by definition of the Margulis tube.
%Furthermore, denote by $i_\epsilon$ the injectivity radius of $T^\epsilon$, which we assume without loss of generality that is less than a constant $K_1$ dependent on $K$.
 %Moreover, if $\ell<2inj_{r}$ 

%Take $r<r_0$  and $r_0-r<K_2$, a constant dependent on $K$ to be determined. Then if for any $r$ we have that $\ell(\Sigma\cap\partial T_r)<2inj_r$, by the same arguments we have that the disk $D$ stays at distance $K_2+2inj_r+1<2(K_1+K_2)+1$ of $\partial \mathbb T(\epsilon)$, which finishes the argument. 

Suppose $1<b< r_0$ is such that $\ell(\Sigma\cap T_r)\geq 2inj_r$ for all $r\in [b-1, b]$.   By \eqref{eq:coareabound} and the coarea formula we have that

\[    \Area(D)\geq \int_{b-1}^b 2.inj_r dr \geq\frac{Re(\lambda)\cosh(b)}{K|\lambda|}.
\]

Since $D$ is area minimizer with respect to its boundary, it has less area than the disk bounded by $\partial D$ in $T_b$, which in turn is less than the total area of $T_b$. Hence 

\[A_b = 2\pi Re(\lambda)\cosh(b)\sinh(b) \geq \frac{Re(\lambda)\cosh(b)}{K|\lambda|}.
\]

From the above inequality, we obtain $\sinh(b)\geq \frac{1}{2\pi K|\lambda|}$, which in turn implies that $b\ge \log\frac{1}{|\lambda|} - \log \pi K$. Since we also had that $r_0\le \log\frac{1}{|\lambda|} + \log C_2$ for some $C_2(K)>0$, then it follows that $r_0-b\le \log C_2\pi K$.

%On the other hand, from \eqref{eqn:tube_radius} \textcolor{red}{Fix the bound as per our discussion} we see that $r_0\le \log\frac{1}{|\lambda|} +\log(C)$ and hence that $r_0-b\le \log C\pi K$.  

Finally, if there is an $r_0 > r \ge r_0 -\log C_2\pi K$ such that $\ell(\Sigma\cap T_r)<2inj_r< 2\epsilon$, then our previous argument shows that $D$ stays at distance at most $ 2\epsilon + \log C_2\pi K$ from $T_{r_0}$.  Combining all of our inequalities, we may then take $d =  2\epsilon + \log C_2\pi K+1$, which satisfies the conclusion of the lemma.
\end{proof}

\begin{remark}
In general $\Sigma\cap T_r$ may not be connected or differentiable for all $r$, and the topology of a component of the intersection of $\Sigma$ with a solid tube maybe not be a disk. %This was important for the coarea inequality and for the maximum principle argument.
However, by Sard's Theorem, for almost every $r$ the intersection $\Sigma\cap T_r$ is differentiable, while for critical values it has $0$ area, and hence we can apply the coarea inequality by bounding the total length of the intersection for regular values of $r$. 
%\textcolor{red}{All intersections of $D$ with the `horotori' are (connected) circles (as long as they are smooth); use the maximum principal.  Length bounds diameter.}
%\textcolor{blue}{Now I think it could be many disjoint disks, but it works just fine}
%For the maximum principle argument to work we need to bound the diameter of $\Sigma\cap \partial T_r$, and technically we have only bounded its total length. 
%For this to be a problem we will need to have that $\Sigma\cap \partial T_r$ is disconnected. Because $\Sigma$ is incompressible, each component of $\Sigma\cap \partial T_r$ bounds a disk in $\Sigma$, and by maximum principle such minimal disk belong to $T_r$. Then $\Sigma\cap T_r$ is a union of disks and we run the argument on each of them.
To apply the maximum principle argument, we need to bound the diameter of $\Sigma\cap  T_r$.  Since $\Sigma$ is incompressible, each component of $\Sigma\cap T_r$ bounds a disk in $\Sigma$, and by maximum principle such a minimal disk belongs to the solid tube of radius $r$ about $\gamma$. 
Thus the length of each component of  $\Sigma\cap T_r$ bounds the diameter of that component, and we can run the argument on each of them.
%a component of the intersection of $\Sigma$ with a tube that intersects $\Sigma\cap T_r$ in multiple components that have arbitrarily large diameter as a set. This would be a problem if we wanted to apply the maximum principle argument to $\Sigma\cap C^r$. Instead, take $\Sigma\cup T^r$  in $D$. Since $D$ is a disk, any closed curve $\gamma$ in $\Sigma\cap T^r$ bounds a disk $D_\gamma$ in $D$. Then the maximum principle argument applies to $D_\gamma$, since now $\ell(\gamma)$ is a bound for the diameter of $D_\gamma$. Finally, all $D_\gamma$ cover $\Sigma\cap C^r$  (in fact outermost curves are enough). 
%[More rough argument] Large values of $\ell$ add to more area of $D$, which collides withe the are minimizer aspect. Small values of $\ell$ guarantee both that components are eventually disks and that they stay out of the tube at a fixed distance. Large and small is versus the injectivity radius of the boundary.
\end{remark}
%We want this last term to be greater than $A_b = 2\pi Re(\lambda)\cosh(b)\sinh(b)$, which is true if and only if $\sinh(b) \leq \frac{1}{2\pi.K|\lambda|}$. Since $|\lambda| \leq \sqrt{Re(\lambda)^2 + K.Re(\lambda)} \leq \sqrt{2K.Re(\lambda)}$. Take then $b = \sinh^{-1}(\frac{1}{2\pi.K\sqrt{2K.Re(\lambda)}})$, so we have constant $C>1$ so that $C^{-1}<e^{2b}.Re(\lambda)<C$.

We now continue with the rest of the proof of Proposition \ref{prop:nodeepness}.
By Lemma \ref{lemma:disk}, we now only need to show that the non-disk components of $\Sigma$ in $\mathbb T(\epsilon_3)$ are at bounded distance (depending on $K$) from $\partial M^{<\epsilon_3}$. If $r$ is at bounded distance (depending on $K$) from $r_0$, it is enough to show that the non-disk components of the intersection of $\Sigma$ with the $r$-neighborhood of a short geodesic are contained in a bounded neighborhood of $T_r$. 

Take then a component of $\Sigma\cap \mathbb T(\epsilon_3)$ that is not a disk. Take $d$ from Lemma \ref{lemma:disk} and $r\in (r_0-d,r_0)$. Then we can assume that the total length of $\Sigma\cap  T_r$ is bounded below by $2.inj_r$, since otherwise each component of the intersection of $\Sigma$ with the $r$-neighborhood of the short curve bounds a disk and we are finished. Moreover, we can further assume that every component of $\Sigma\cap T_r$ is homotopically essential in $T_r$. Indeed, if such a component was null homotopic then it would bound a disk in $\Sigma$, because $\Sigma$ is incompressible. But then by maximum principle such minimal disk cannot exit $\mathbb T(\epsilon_3)$, so we can take care of it by Lemma \ref{lemma:disk}. Now we have that the length of each component of $\Sigma\cap  T_r$ is bounded below by $2.inj_r$.

By \eqref{eq:coareabound} we see again that $\Area(\Sigma\cap (\bigcup_{s\in (r-1, r)}T_s))$ is bigger than $\Area(T_r)$ for some $r\in (r_0-d,r_0)$, as we did in Lemma \ref{lemma:disk}. Then since $\Sigma$ is incompressible and the curves in $\Sigma\cap T_b$ are mutually disjoint and homotopically essential, each component of $\Sigma\cap T_b$ bounds an essential annulus. But this is impossible since after such homotopy we would have reduced the area of $\Sigma$.
%\textcolor{red}{Topology of intersections again: annuli all have two incompressible curves on the boundary torus.  Otherwise, we violate the maximum principal again/the annulus was really a disk.}
\end{proof}

\begin{remark}
The main result of \cite{HuangWang} essentially states that if a hyperbolic mapping torus $M$ has a short curve (necessarily isotopic to a simple curve in the fiber) whose boundary torus lies is a \emph{specific} compact subset of the moduli space of flat tori then a least area minimal embedding in the homology class of the fiber stays some definite distance from the short curve.  
We obtain the same conclusion if the curve is short enough, but this threshold depends on the shape of the boundary tube.
\end{remark}

Now we can apply Proposition \ref{prop:nodeepness} for geometrically convergent sequences of hyperbolic manifolds where the thin regions converge to rank-$2$ cusps.

\begin{theorem}\label{thm:cusprepelling}
Let $M_n$ be a sequence of complete hyperbolic $3$-manifolds without rank-$1$ cusps converging geometrically to $M_\infty$, where if components $M_n^{<\epsilon_3}$ converge to a cusp, then the rank is $2$, and there are only finitely many such limits in $M_\infty$. For each $n$, assume we have an embedded incompressible closed minimal surface $\Sigma_n\subset M_n$ of bounded genus  that is an area minimizer in its homotopy class. Then there exists $d>0$ such that $\Sigma_n$ is contained in a $2d$-neighborhood of the $\epsilon_3$-thick part of $M_n$ for all $n$ sufficiently large. 
\end{theorem}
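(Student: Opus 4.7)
The strategy is to apply Proposition~\ref{prop:nodeepness} componentwise to $M_n^{<\epsilon_3}$, and the only substantial work is verifying its shape hypothesis $K^{-1}\le Im(\lambda_n)^2/Re(\lambda_n)$ with a constant $K$ that is uniform in $n$. Given a component $U_n\subset M_n^{<\epsilon_3}$, I would distinguish three cases: (i) $U_n$ is a rank-$2$ cusp neighborhood of $M_n$; (ii) $U_n$ is a Margulis tube around a geodesic of complex length $\lambda_n$ with $|\lambda_n|$ bounded away from $0$; (iii) $U_n$ is a Margulis tube with $|\lambda_n|\to 0$. Case~(i) is handled by Hass's original argument in~\cite{Hass15}, which bounds the depth of $\Sigma_n$ in $U_n$ in terms of the shape of the cross-section, and by hypothesis only finitely many such shapes appear as geometric limits, so the bound is uniform. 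Case~(ii) is trivial: by the two-sided Brooks--Matelski estimate $C^{-1}\le e^{r_0(n)}|\lambda_n|\le C$ the tube radius $r_0(n)$ is uniformly bounded, so $U_n$ has bounded diameter.

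The crux is case~(iii). Pointed Gromov--Hausdorff convergence implies that the flat torus $T_{r_0(n)}=\partial U_n$ converges, as a flat torus with its metric, to the $\epsilon_3$-level cross-section of one of the finitely many limiting rank-$2$ cusps of $M_\infty$. Hence the isometry class $[T_{r_0(n)}]$ eventually lies in a fixed compact subset of the moduli space $\mathbb{H}/SL(2,\mathbb{Z})$. Since $\mathrm{sys}(T_{r_0(n)})=2\epsilon_3$ by definition of the thin-part boundary, this compactness is equivalent to a uniform upper bound on $\mathrm{Area}(T_{r_0(n)})$. Substituting the Minsky formula $\mathrm{Area}(T_{r_0(n)})=2\pi Re(\lambda_n)\cosh(r_0(n))\sinh(r_0(n))$ together with the Brooks--Matelski relation $e^{r_0(n)}|\lambda_n|\asymp\epsilon_3$, this area bound becomes $Re(\lambda_n)/|\lambda_n|^2\le B$ for some $B$ depending only on the finitely many limit cusps. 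Using $|\lambda_n|^2=Re(\lambda_n)^2+Im(\lambda_n)^2$ and the fact that $Re(\lambda_n)\to 0$, one inverts this to $Im(\lambda_n)^2/Re(\lambda_n)\ge 1/(2B)=K^{-1}$ for $n$ large, giving the required uniform lower bound.

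With $K$ fixed and $Re(\lambda_n)<\delta(K)$ eventually, Proposition~\ref{prop:nodeepness} applies in case~(iii) to bound $\Sigma_n\cap U_n$ within distance $2d(K)$ of $\partial U_n$. Combined with cases~(i) and~(ii), this yields a single uniform $d>0$ valid on every component, proving the theorem. The step that I expect to require the most care is translating the compactness of $[T_{r_0(n)}]$ in moduli space into the analytic inequality on $Re(\lambda_n)/|\lambda_n|^2$, since the raw modular parameter $\omega_{r_0(n)}\sim 2\pi i/\lambda_n$ itself diverges and only the intrinsic quantity $\mathrm{Area}/\mathrm{sys}^2$ captures the compactness in a form directly usable with Minsky's formulas.
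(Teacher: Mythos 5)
Your proposal is correct and follows essentially the same route as the paper: the key step in both is that geometric convergence of the tube-boundary tori to a fixed compact family of flat tori yields a uniform bound on $Re(\lambda_n)/|\lambda_n|^2$, which after discarding the negligible $Re(\lambda_n)^2$ term gives the shape hypothesis $K^{-1}\le Im(\lambda_n)^2/Re(\lambda_n)$ of Proposition~\ref{prop:nodeepness}. Your derivation of that bound via Minsky's area formula and the Brooks--Matelski radius estimate is a minor cosmetic variation on the paper's use of Minsky's asymptotic $\omega_n\approx 2\pi i/\lambda_n$ together with the bound on $Im(\omega_n)$, and your explicit three-case decomposition simply spells out the trivial cases (bounded-radius tubes, actual rank-$2$ cusps in $M_n$) that the paper leaves implicit.
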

\begin{proof}
Consider a rank-$2$ component $C$ of $M_\infty^{<\epsilon_3}$ approximated by compact Margulis tubes $\mathbb T_n(\epsilon_3) \subset M_n$.  Then geometric convergence $M_n\overset{geom}{\longrightarrow}M_\infty$ tells us that the boundary tori $T_n = \partial \mathbb T_n(\epsilon_3)$ converge in the moduli space of Euclidean tori to $T=\partial C$, so that given $\epsilon>0$, the area $A_n$ of $\partial T_n$ is within $\epsilon$ of the area $A$ of $\partial T$, for large enough $n$.

Let $\lambda_n$ and $\omega_n$ be the complex length and Teichm\"uller parameter of the core geodesic and boundary torus of $\mathbb T_n(\epsilon_3)$, respectively.  We also have the scale parameter $t_n$ which determines the area $A_n$ of $T_n$ by $A_n = t_n^2Im(\omega_n)$, as in the proof of Proposition \ref{prop:nodeepness}.

Since the radii $r_n$ of the tubes $\mathbb T_n(\epsilon_3)$ tend to infinity with $n$, by \cite[Lemma 3.2]{Minsky10} we have 
\begin{align*}|\lambda_n - 2\pi i/\omega_n| &= (1-\tanh(r_n)) Re(2\pi i/\omega_n)\\
&\approx 2e^{-2r_n}\frac{2\pi Im(\omega_n)}{|\omega_n|^2}=2e^{-2r_n}\frac{2\pi Im(\omega_n)t_n^2}{4\pi^2\sinh^2(r_n)}\\
&\approx e^{-4r_n} \frac{(A+\epsilon)}{\pi}.
\end{align*}
In particular, this implies that $\omega_n = 2\pi i/\lambda_n + O(e^{-4r_n}) $.

Any preimage to $\HH^2$ of the conformal parameter of $T$ in the moduli space has imaginary part at most $c$. Thus, given $\epsilon>0$,  we have \[ \frac{2\pi  Re\lambda_n}{|\lambda_n|^2} <c+\epsilon +O(e^{-4r_n}),\] for $n$ large enough.
Rearranging the terms and noting that since $Re(\lambda_n) \to 0$, $Re(\lambda_n)^2$ is negligible compared to $Re(\lambda_n)$, we can recover
\[\frac{Im(\lambda_n)^2}{Re(\lambda_n)} >\frac{2\pi}{c+1} = K,\]
for $n$ large enough.

There are finitely many rank-$2$ parabolic cusps in $M_\infty$, so we may apply Proposition \ref{prop:nodeepness} to obtain $d$ such that $\Sigma\cap M_n^{<\epsilon_3}$ is within distance $2d$ of $\partial M_n^{<\epsilon_3}$.
\end{proof}
\begin{remark}
It is immediate from the proof of Theorem \ref{thm:cusprepelling} that the conclusion still holds if we allow infinitely many rank-$2$ parabolic cusps in $M_\infty$, as long as we assume that the Euclidean structures on the boundaries of the rank $2$ cusps lie in a compact set of the ($3$-dimensional) moduli space of Euclidean tori. 
\end{remark}

\begin{remark}
We may also relax the hypotheses of Theorem \ref{thm:cusprepelling} to include the case that $M_\infty$ has infinity many cusps if we require that the minimal surfaces $\Sigma_n$ stay at bounded distance from the basepoint for geometric convergence $M_n\to M_\infty$.  Indeed, we claim that $\Sigma_n$ can only ever meet a (uniformly) bounded number of components of $M_n^{<\epsilon}$, to which we can apply the proof of Theorem \ref{thm:cusprepelling}.  %, and each component has (uniformly) bounded diameter.

First we argue that each component of $\Sigma_n\cap M_n^{\ge \epsilon}$ contributes a definite amount of area to $\Sigma_n$.  To see this, first note that there is a definite distance between $\epsilon$-Margulis tubes and goes to infinity as $\epsilon$ goes to $0$ (see \cite[\S 3.2.2]{Minsky10}).  Taking $\epsilon$ smaller if necessary, we may assume that the distance between distinct tubes is at least $\epsilon$.  Now, find a maximal $\epsilon$-separated set $F$ in $\Sigma_n\cap M_n^{\ge \epsilon}$.  By Anderson's monotonicity formula \cite{Anderson82},  the intersection of each (embedded) $\epsilon/2$-ball centered at a point of $F$ with $\Sigma_n$ contributes at least $4\pi \sinh^2(\epsilon/4)$, the area of a disk of radius $\epsilon/2$ in the hyperbolic plane, to the area of $\Sigma_n$, hence at least $|F|4\pi \sinh^2(\epsilon/4)$ total area to $\Sigma_n$.  

Since the area of $\Sigma_n$ is at most $2\pi|\chi(\Sigma_n)|$ by Gauss-Bonnet, it follows that $F$ has uniformly bounded cardinality.  Since the $\epsilon$-balls centered at $F$ cover $\Sigma_n\cap M_n^{\ge \epsilon}$, the sum of diameters of each component is bounded by $2|F|\epsilon$. Since the number of components of $\Sigma_n\cap M_n^{\ge \epsilon}$ is uniformly bounded,  $\Sigma_n\cap M_n^{\ge \epsilon}$ has bounded distance from the basepoint, and hence it can only intersect finitely many of the $\epsilon$-tubes in $M_n$.  

Since $M_n\to M_\infty$, the boundary of the $\epsilon$-tubes meeting $\Sigma_n$ lie in a compact set of the moduli space, and we can apply the proof of Theorem \ref{thm:cusprepelling} to obtain the result at the beginning of the remark.  
\end{remark}
Theorem \ref{thm:cusprepelling} says that there exists $\epsilon_3\ge\epsilon>0$ so that for $n$ large we have $\Sigma_n\subseteq M_n^{>\epsilon}$. In fact, this holds for any $\epsilon$ satisfying $\log(\frac{\epsilon_3}{\epsilon})\geq 2d+c_3$ for some constant $c_3$ (see \cite{Minsky10}). Since $\Sigma$ is incompressible, this means that the injectivity radius of the intrinsic metrics of $\Sigma_n$ is uniformly bounded below.

Following \cite{Uhlenbeck83}, \cite{Taubes04}, we say that $(\Sigma,g,\alpha,u)$ is a minimal surface in hyperbolic geometry if

\begin{itemize}
    \item $g$ is a hyperbolic metric on $\Sigma$.
    \item $\alpha$ is a holomorphic quadratic differential on $(\Sigma,g)$
    \item $u:\Sigma\rightarrow\mathbb{R}$ is a smooth positive function.
    \item The 2-form $Re(\alpha)$ satisfies the Gauss equation with respect to $e^{2u}g$ and $\mathbb{H}^3$. Explicitly
    \[\Delta u + 1 - e^{2u} - |\alpha|^2e^{-2u}=0
    \]
\end{itemize}

Morally, $\Sigma$ represents a minimal surface with second fundamental form $Re(\alpha)$ in a hyperbolic 3-manifold. The conditions above say that the universal cover $(\hat\Sigma,\hat g,\hat\alpha,\hat u)$ has a immersion to $\mathbb{H}^3$ as a minimal plane invariant by the isometries given by a representation $\pi_1(\Sigma)\rightarrow {\rm PSL}(2,\mathbb{C})$. We will still denote $(\Sigma,g,\alpha,u)$ by $\Sigma$ for simplicity. We can also talk about the index of the second variation of area, and in particular define if a minimal surface $\Sigma$ is stable or not. Well-know results (see \cite{SchoenSimonYau75}) say that for a stable minimal surfaces, a lower bound on injectivity radius implies an upper bound on the norm of the second fundamental form. Such results apply to this definition of minimal surfaces in hyperbolic geometry, and the following compactness result follows.

\begin{theorem}\label{thm:bounded_geometry_converge}
Let $\Sigma_n$ a sequence of stable minimal surfaces in hyperbolic geometry with a positive lower bound on their injectivity radius and bounded genus. Then, up to subsequence, there is a stable minimal surface $\Sigma_\infty$ such that $\Sigma_n \overset{C^{\infty}}{\longrightarrow} \Sigma_\infty$.
\end{theorem}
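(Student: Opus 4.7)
The plan is to invoke the standard compactness theory for stable minimal immersions with bounded second fundamental form, translated into Uhlenbeck's $(g,\alpha,u)$-framework. After passing to a subsequence, we may assume all $\Sigma_n$ are diffeomorphic to a fixed closed surface $S$. By the Schoen--Simon--Yau curvature estimate quoted just above, stability together with the uniform lower bound on injectivity radius yields a uniform pointwise bound
\[
|A_n|^2_{e^{2u_n}g_n} \;=\; 2|\alpha_n|^2_{g_n}\, e^{-4u_n} \;\leq\; C
\]
on the norm of the second fundamental form $A_n = \operatorname{Re}(\alpha_n)$.

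The Gauss equation for a minimal immersion in $\HH^3$ then reads $K_{e^{2u_n}g_n} = -1 - \tfrac{1}{2}|A_n|^2 \in [-1-C/2,\,-1]$, so the induced metrics on $S$ have sectional curvature confined to a fixed compact interval. Combined with the injectivity radius lower bound, Cheeger--Gromov compactness produces diffeomorphisms $\phi_n \colon \Sigma_\infty \to S$ onto a surface $\Sigma_\infty$ of the same genus such that $\phi_n^{*}(e^{2u_n}g_n) \to h_\infty$ in $C^{1,\alpha}$. Bootstrapping on the Gauss and Codazzi equations, which together form an elliptic system in the immersion data, upgrades convergence to $C^\infty$ and in particular gives $C^\infty$ convergence $\phi_n^{*} A_n \to A_\infty$ to a limiting symmetric, trace-free $(0,2)$-tensor.

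To recover the triple $(g_\infty,\alpha_\infty,u_\infty)$, observe that uniformization selects a unique hyperbolic metric $g_\infty$ in the conformal class of $h_\infty$ and depends smoothly on the conformal structure; therefore $\phi_n^{*} g_n \to g_\infty$ smoothly, and the relation $h_n = e^{2u_n}g_n$ yields smooth convergence of $\phi_n^{*} u_n$ to a limiting conformal factor $u_\infty$. Since the Codazzi equation for a minimal immersion in a space of constant curvature is equivalent to holomorphicity of $\alpha$, the limit $A_\infty$ is the real part of a holomorphic quadratic differential $\alpha_\infty$ on $(\Sigma_\infty, g_\infty)$. Each defining equation of a minimal surface in hyperbolic geometry passes to the limit by continuity, so $(\Sigma_\infty, g_\infty, \alpha_\infty, u_\infty)$ is itself such a surface. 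Finally, stability is inherited by lower semi-continuity of the Morse index: a compactly supported test function producing negative second variation of area for $\Sigma_\infty$ could be transplanted via $\phi_n$ to produce negative second variation for $\Sigma_n$ at large $n$, contradicting stability of the $\Sigma_n$.

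The main technical obstacle is the simultaneous control of the three coupled objects $(g_n,\alpha_n,u_n)$, which live on varying Riemann surface structures as $n$ varies. Once the curvature estimate $|A_n|\leq C$ is in hand, however, the Gauss equation forces bounded geometry of the induced metric, and standard elliptic bootstrapping on surfaces of bounded geometry promotes $C^0$ bounds to $C^\infty$ bounds, delivering the desired smooth subsequential limit.
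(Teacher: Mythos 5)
Your argument matches the paper's intended proof: the authors give only a one-sentence sketch pointing to Schoen--Simon--Yau, namely that stability plus a lower injectivity radius bound yields a uniform second-fundamental-form bound from which compactness follows, and your proposal correctly fills in exactly this chain (SSY estimate, Gauss equation bounding curvature, Cheeger--Gromov plus elliptic bootstrapping for $C^\infty$ convergence, Codazzi for holomorphicity of the limiting $\alpha_\infty$, and lower semi-continuity of index for stability). The details are sound and this is the same route the paper takes.
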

%\textcolor{red}{Alternative:}
%\begin{theorem}
%If $(M_n, p_n)$ are (pointed) complete hyperbolic 3-manifolds converging geometrically to $(M,p)$, and $(\Sigma_n, p_n) \subset (M_n,p_n)$ are (pointed) closed stable minimal surfaces with upper genus bounds and lower injectivity radius bounds, then there is a stable minimal surface $(\Sigma_\infty, p) \subset (M,p)$ with $(\Sigma_n, p_n)$ converging geometrically to $(\Sigma_\infty, p)$.
%\end{theorem}

Since we are dealing with minimal surfaces in hyperbolic space, this surface has negative curvature bounded by $-1$. In particular, given Theorem \ref{thm:cusprepelling}, we know that $\pi_1$-injective area minimizers along a geometrically convergent sequence of hyperbolic 3-manifolds have limits in the limit manifold.

\section{Minimal surfaces with bounded curvature}\label{sec:AF}

In her seminal work, Uhlenbeck \cite{Uhlenbeck83} describes the space of stable minimal surfaces in hyperbolic geometry, and more precisely, the space of almost-Fuchsian surfaces, detailed in the following theorem.

\begin{theorem}[\cite{Uhlenbeck83}]\label{thm:AF}
If $M$ is a complete hyperbolic $3$-manifold and $\Sigma\subset M$ is a closed minimal surface with principal curvatures $|k_{1,2}(x)|\leq 1$ then
\begin{enumerate}
    \item ${\rm exp}\, T^\perp\Sigma \simeq \widetilde{M} \rightarrow M$, where $\widetilde{M}$ is the covering of $M$ associated to $\pi_1(\Sigma)$.
    \item\label{item:strict} If $|k_{1,2}| < 1$, then $\widetilde{M}$ is quasi-Fuchsian.
    \item\label{item:maxprinciple} $\Sigma$ is area-minimizing and is the only closed minimal surface in $\widetilde{M}$.
    \item $\Sigma\subset\widetilde{M}$ is embedded.
    \item $\Sigma\subset M$ is totally geodesic if and only if $\widetilde{M}$ is Fuchsian.
\end{enumerate}
\end{theorem}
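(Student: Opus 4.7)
The plan is to work in Fermi coordinates along $\Sigma$. Lift $\Sigma$ to a minimal plane $\tilde\Sigma\subset\HH^3$ invariant under a representation $\rho:\pi_1(\Sigma)\to{\rm PSL}(2,\CC)$. In Fermi coordinates $(x,r)$ with $x\in\tilde\Sigma$, the hyperbolic metric takes the form $dr^2+\langle A_r\cdot,A_r\cdot\rangle_{g_{\tilde\Sigma}}$ with $A_r=\cosh(r)I-\sinh(r)S_x$, where $S_x$ is the shape operator at $x$. Minimality forces the eigenvalues of $S_x$ to be $\pm k(x)$, and the hypothesis $|k_{1,2}(x)|\le 1$ gives $|k(x)|\le 1$, so the direct computation $\det A_r=\cosh^2(r)-k(x)^2\sinh^2(r)\ge 1$ shows that $A_r$ is invertible for every $r\in\RR$. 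The normal exponential map $\Phi:T^\perp\tilde\Sigma\to\HH^3$ is therefore an immersion everywhere; because it sends every fiber to a complete geodesic of $\HH^3$, completeness upgrades it to a covering map and hence to a diffeomorphism onto the simply connected $\HH^3$. Equivariance under $\rho$ descends $\Phi$ to a diffeomorphism $T^\perp\Sigma\cong\widetilde M$, which proves items (1) and (4), the latter because $\Sigma$ is the zero section.

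Item (3) I would establish by a calibration argument using the closed $2$-form $\omega=\pi^*dA_\Sigma$ on $\widetilde M$, where $\pi:\widetilde M\to\Sigma$ is the Fermi projection. For orthonormal $v_1,v_2\in T_{(x,r)}\widetilde M$, decompose $v_i=a_i\partial_r+w_i$ with $w_i\in T\Sigma_r$: orthogonal projection onto the $2$-plane $T\Sigma_r$ contracts $2$-vectors, giving $|w_1\wedge w_2|_{g_r}\le 1$, and the coordinate identification of $\Sigma_r$ with $\Sigma$ rescales areas by the factor $1/\det A_r\le 1$, so $|\omega(v_1,v_2)|\le 1/\det A_r\le 1$. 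Stokes' theorem then yields $\Area(\Sigma)=\int_\Sigma\omega=\int_{\Sigma'}\omega\le\Area(\Sigma')$ for any closed surface $\Sigma'$ homotopic to $\Sigma$ in $\widetilde M$. Uniqueness of closed minimal surfaces in $\widetilde M$ would then follow from the maximum principle applied to the foliation $\{\Sigma_r\}$: for $r\ne 0$ the equidistant surfaces have mean curvature of definite sign, so a second closed minimal $\Sigma''$ would lie in some compact slab $\{|r|\le R\}$, and pushing $\Sigma_{r_0}$ to an outermost tangency with $\Sigma''$ forces $\Sigma''=\Sigma$.

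For item (2), assuming strict $|k(x)|<1$, a Riccati argument shows that the principal curvatures of $\Sigma_r$ tend uniformly to $\pm 1$ as $r\to\pm\infty$, so the equidistant surfaces become asymptotically horospherical. This forces the limit set of $\rho(\pi_1(\Sigma))$ to be a quasi-circle, i.e.\ $\widetilde M$ is quasi-Fuchsian. For item (5), total geodesicity $S\equiv 0$ reduces $A_r$ to $\cosh(r)I$ and the Fermi metric to the warped product $dr^2+\cosh^2(r)g_\Sigma$, which is the standard presentation of $\HH^3$ over a totally geodesic $\HH^2$; conversely a Fuchsian $\widetilde M$ contains an invariant totally geodesic plane which by the uniqueness in (3) must coincide with $\Sigma$.

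The main technical obstacle I anticipate is item (2): passing from the pointwise statement that the principal curvatures of $\Sigma_r$ tend to $\pm 1$ to the global conclusion that the limit set is a quasi-circle requires uniform control of the extrinsic geometry of $\Sigma_r$ together with either an Ahlfors--Bers-type extension criterion or a direct construction of a quasi-conformal conjugacy on the sphere at infinity. The diffeomorphism claim, the calibration, and the maximum-principle step are by contrast essentially pointwise Riccati computations and should proceed routinely once the Fermi setup is in place.
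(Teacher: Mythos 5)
The paper does not prove Theorem~\ref{thm:AF}; it cites it to Uhlenbeck. That said, the Proposition immediately following it in Section~\ref{sec:AF} does prove items (1) and (4), in the more general setting of \emph{complete} surfaces, via exactly the Fermi-coordinate device you use: the explicit hyperbolic metric $G(x,t)$ on $\Sigma\times\RR$, positive definiteness from $|k_{1,2}|\le 1$, and completeness of $\Sigma$. Your determinant computation $\det A_r = \cosh^2 r - k^2\sinh^2 r\ge 1$ is correct, and your calibration $\omega = \pi^*dA_\Sigma$ with comass $1/\det A_r\le 1$ on equidistant leaves is a valid route to the area-minimizing part of item (3); Uhlenbeck's own argument compares coordinate area elements directly, but it amounts to the same inequality. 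The maximum-principle step for uniqueness and your item (5) argument are also sound.

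The genuine gap is in item (2), which you flag yourself. Computing via Riccati that the principal curvatures of $\Sigma_r$ tend pointwise to $\pm1$ does not produce quasi-Fuchsianity on its own. What is needed is the \emph{uniform} strict bound $\sup_\Sigma|k|\le 1-\epsilon$ (available by compactness of $\Sigma$ exactly when $|k_{1,2}|<1$), which gives a uniform rate of convergence of $\Sigma_r$ to horospheres and hence a slab $\{|t|\le R\}$ of bounded width containing the convex core of $\widetilde M$. A bounded convex core gives geometric finiteness, and for a closed surface group without parabolics geometric finiteness is equivalent to quasi-Fuchsian. This is precisely why item (2) requires $|k_{1,2}|<1$ while the other items need only $\le$, as the Remark following the theorem in the paper explains, and why the borderline case $|k_{1,2}|\le 1$ with equality somewhere was only recently settled by Huang and Lowe. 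A smaller issue: your phrase ``completeness upgrades [the normal exponential map] to a covering map'' glosses over the step of verifying that the pullback metric on $T^\perp\tilde\Sigma$ is geodesically complete; this uses completeness of $\Sigma$ together with the explicit form of $G$, not merely the fact that the $\RR$-fibers map to complete geodesics.
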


\begin{remark}
We observe that statement (\ref{item:strict}) was only known for $|k_{1,2}|$ strictly bounded by $1$, but in recent work Huang and Lowe \cite{HuangLowe21} that the same conclusion holds for $|k_{1,2}|\leq1$. All other items (including (\ref{item:maxprinciple})) have essentially the same proof for either $|k_{1,2}(x)|\leq 1$ or $|k_{1,2}(x)|< 1$. Commonly the notation of almost-Fuchsian refers to the case when $|k_{1,2}(x)|< 1$.
\end{remark} 

A consequence of this result is that every point of $p\in\Sigma$ has a ``unique" outer tangent horosphere $H_p$. More precisely, lifting to universal covers $\widetilde{\Sigma}\subset\mathbb{H}^3$, for each choice of normal outer direction and for each $\widetilde{p}\in\widetilde{\Sigma}$, there exist a horosphere $H_{\widetilde{p}}$ outer-tangent to $\widetilde{\Sigma}$ at $\widetilde{p}$. This means that is tangent on the side of the chosen normal, and $\widetilde{\Sigma}$ does not intersect the interior of $H_{\widetilde{p}}$.  Moreover, this is a one-to-one correspondence, meaning that $\widetilde{p}$ is the only point of contact for $H_{\widetilde{p}}$,   i.e. $\widetilde{\Sigma}\cap H_{\widetilde{p}} = \lbrace \widetilde{p} \rbrace$. %and such family of horospheres is invariant by the action of $\pi_1(\Sigma)$. 
We would like to describe how this behavior translates for the boundary of almost-Fuchsian manifolds.

\begin{prop}
Let $M$ be a complete hyperbolic 3-manifold and $\Sigma\subset M$ a complete surface that has principal curvatures bounded by $1$ in size ($|k_{1,2}|\leq 1$). Then
\begin{enumerate}
    \item ${\rm exp}\, T^\perp\Sigma \simeq \widetilde{M} \rightarrow M$, where $\widetilde{M}$ is the covering of $M$ associated to $\pi_1(\Sigma)$.
    \item $\Sigma\subset \widetilde{M}$ is embedded.
\end{enumerate}
\end{prop}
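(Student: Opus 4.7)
The plan is to lift to the universal cover $\HH^3$ of $M$, write $\widetilde\Sigma$ for the corresponding simply connected cover of $\Sigma$ equivariantly immersed in $\HH^3$ via the representation $\pi_1(\Sigma) \to \pi_1(M) \to \mathrm{PSL}(2,\CC)$, and show that the normal exponential map $F: T^\perp\widetilde\Sigma \to \HH^3$ is a global diffeomorphism. Both conclusions of the proposition then follow by descending $F$ equivariantly via the $\pi_1(\Sigma)$-action: the quotient of $\HH^3$ by $\pi_1(\Sigma)$ is $\widetilde M$, the quotient of $T^\perp\widetilde\Sigma$ is $T^\perp\Sigma$, and the zero section of $T^\perp\Sigma$ maps homeomorphically onto the image of $\Sigma$ in $\widetilde M$.

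To see that $F$ is a local diffeomorphism, compute its differential via Jacobi fields. Along a unit-speed normal geodesic $t \mapsto \exp_p(tN)$ with $N$ a unit normal at $p \in \widetilde\Sigma$, the Jacobi field with initial conditions coming from a principal direction $V \in T_p\widetilde\Sigma$ with principal curvature $k$ takes the form $J(t) = (\cosh(t) - k\sinh(t)) P_t V$, where $P_t$ denotes parallel transport along the normal geodesic. Since $|k| \le 1$, the coefficient equals $\frac{1-k}{2}e^t + \frac{1+k}{2}e^{-t} \ge e^{-|t|} > 0$, so $J(t) \ne 0$ and $dF$ is everywhere invertible.

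To upgrade to a global diffeomorphism, equip $T^\perp\widetilde\Sigma$ with the pullback metric $g^* = F^* g_{\HH^3}$, under which $F$ is a local isometry. We claim $g^*$ is geodesically complete. Trivializing $T^\perp\widetilde\Sigma \cong \widetilde\Sigma \times \RR$ via a global choice of unit normal, any unit-$g^*$-speed path $\gamma(s) = (p(s), t(s))$ on $[0,T)$ has a $g^*$-orthogonal splitting of $\gamma'$ into radial and horizontal components, and the Jacobi field bound above yields $|t'(s)| \le 1$ together with $|p'(s)|_{g_{\widetilde\Sigma}} \le e^{|t(s)|}$. For finite $T$, the first estimate shows $t(s)$ is uniformly bounded on $[0,T)$ and has a limit as $s \to T^-$; the second then bounds the $g_{\widetilde\Sigma}$-length of $p|_{[0,T)}$, so $p(s)$ also has a limit by completeness of $\widetilde\Sigma$. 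Thus $\gamma(s)$ converges in $T^\perp\widetilde\Sigma$, allowing any geodesic to extend past any finite time. The local isometry $F$ from a complete manifold to the connected $\HH^3$ is therefore a Riemannian covering; since $\widetilde\Sigma \cong \RR^2$ and its normal bundle is trivial, $T^\perp\widetilde\Sigma \cong \RR^3$ is simply connected, so the covering $F$ onto simply connected $\HH^3$ must be a diffeomorphism.

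The principal obstacle is establishing completeness of $g^*$. The horizontal directions of the pullback metric can shrink as fast as $e^{-|t|}$ when $|k| = 1$, so a priori a $g^*$-geodesic could escape tangentially in finite length. The resolution is exactly the Jacobi field lower bound that makes $F$ a local diffeomorphism in the first place; it controls how fast horizontal displacement can accumulate in terms of the fibre coordinate. This is the only point where the sharp hypothesis $|k_{1,2}| \le 1$ is used, and it is what lets the argument proceed in the absence of compactness or minimality assumptions on $\Sigma$.
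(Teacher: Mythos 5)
Your proof is correct and follows the same approach as the paper: the paper writes down the explicit pullback metric $G$ on $\Sigma\times\RR$ (which is your $g^* = F^*g_{\HH^3}$ in principal coordinates), notes that $|k_{1,2}|\le 1$ makes it positive-definite, and asserts completeness, deferring the verifications to Uhlenbeck. You lift to universal covers and make the completeness step explicit via the estimate $|p'(s)|_g \le e^{|t(s)|}$, which the paper leaves implicit, but the underlying argument is the same.
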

\begin{proof}
As in \cite{Uhlenbeck83} we see that if $g$ denotes the induced metric in $\Sigma$ and $B$ denotes its shape operator, then we have the explicit hyperbolic metric in $\Sigma_x\times \mathbb{R}_t$

\[    G(x,t)(u,v) = g(x)(\cosh{t}.u+\sinh{t}.Bu,\cosh{t}.v+\sinh{t}.Bv) + rs,\quad (u,r),\,(v,s)\in T_{(x,t)}\Sigma\times\mathbb{R}
\]

The condition $|k_{1,2}|\leq 1$ allows us to see that the metric $G$ is positive definite, while the completeness of $\Sigma$ allows us to see that $(\Sigma_x\times \mathbb{R}_t, G)$ is complete. As in \cite{Uhlenbeck83} one can verify that the metric $G$ is hyperbolic and that $t=0$ is isometric to $\Sigma$. This proves both items.
\end{proof}

%Let us see how to conclude in this general version the statement about outer tangent horosphere. 
We now describe the behavior of outer tangent horospheres when $|k_{1,2}|\le 1$.
Take $\widetilde{\Sigma}$ the universal covering of $\Sigma$. Since it is complete, we know that ${\rm exp}\, T^\perp\widetilde\Sigma$ is isometric to $\mathbb{H}^3$. 
Hence the normal geodesics to $\widetilde{\Sigma}$ are mutually disjoint. For a choice of outer normal direction and each $ p\in\widetilde{\Sigma}$, we can follow the normal geodesic at $p$ along the outer normal to find a point at infinity $z(p)$. Then the horosphere $H_p$ centered at $z(p)$ passing through $p$ is tangent to $\widetilde{\Sigma}$. To see that $H_p$ is outer tangent, i.e.,  that $\widetilde{\Sigma}$ is disjoint from the horoball bounded by $H_p$, foliate the horoball by the geodesic spheres tangent at $p$. If $\widetilde{\Sigma}$ has a point in the interior of $H_p$, then one of the geodesic spheres from the foliation is tangent to $\Sigma$ at a point different from $p$. Such point would have a normal geodesic intersecting the one emanating from $p$. 

Note that, when $|k_{1,2}( p)| =1$, we cannot necessarily conclude that $\widetilde{\Sigma}\cap H_{p} = \lbrace {p} \rbrace$; see Proposition \ref{prop:parabolic_curve}.
%As for the equality , if we have the strict inequality $|k_{1,2}|< 1$ we postpone it until a later proposition.

As Uhlenbeck points out, it was known by Bianchi and others that if $\Sigma$ is a minimal surface in hyperbolic geometry, then the second fundamental form of $\Sigma$ is the real part of a holomorphic quadratic differential (with respect to the conformal structure induced in $\Sigma$). Then if $\mu$ denotes a conformal structure on $\Sigma$ with hyperbolic metric $g^\mu$, and $\alpha$ is a quadratic differential with respect to $\mu$, Uhlenbeck  \cite[Theorem 4.2]{Uhlenbeck83} describes the Gauss-Codazzi equations for a minimal surface $\Sigma$ in hyperbolic geometry with induced metric $g=e^{2u}g^\mu$:
\begin{equation}\label{eq:GaussCodazzi}
    \Delta_{g^\mu} u + 1 -e^{2u} - |\alpha|_{g^\mu}^2e^{-2u} = 0
\end{equation}

By solving this equation with $g^\mu$ and $\alpha$ as coefficients, Uhlenbeck describes the space of \textit{stable} minimal surfaces. We say that a minimal surface is stable if the second variation of area is a non-negative operator. Uhlenbeck notices that the linearization of \eqref{eq:GaussCodazzi} coincides with the second variation of area and, by applying the Implicit Function Theorem and maximum principle, shows \cite[Theorem 4.4]{Uhlenbeck83} that the space of stable minimal surfaces is star-shaped with respect to the parameter $\alpha$. This means that for given $(g^\mu,\alpha)$ there exists $t_\alpha$ such that the following hold:

\begin{itemize}
    \item If $0\leq t\leq t_\alpha$ then the equation \eqref{eq:GaussCodazzi} for $(g^\mu,t\alpha)$ has a unique stable solution.
    \item Any stable minimal surface is found in this way.
    \item As $t$ increases, the principal curvatures $\pm k(x,t)$ of non-umbilic points strictly increase in size.
\end{itemize}
From this discussion, we can see that \eqref{item:strict} in Theorem \ref{thm:AF} can be replaced by
\begin{itemize}
    \item[(2')] \label{item:nonstrict} If $|k_{1,2}|<1$ then $\widetilde{M}$ is quasi-Fuchsian. If $|k_{1,2}|=1$ somewhere, then $\widetilde{M}$ can be obtained as the limit of almost-Fuchsian manifolds, i.e. quasi-Fuchsian manifolds with a minimal surface satisfying the strict inequality $|k_{1,2}|<1$.
\end{itemize}

Thanks to (\ref{item:nonstrict}) we can expand the property of outer tangent horosphere to minimal surface with $|k_{1,2}|\leq 1$. Clearly, having an outer tangent horosphere is a property preserved by closure. What we potentially lose is the one-to-one correspondence between points in the surface and outer-tangent horospheres. We can have multiple points with the same outer-tangent horosphere, but this can only happen under special circumstances.

\begin{prop}\label{prop:parabolic_curve}
Let $\widetilde{\Sigma}$ be an embedded surface in $\mathbb{H}^3$ with principal curvatures $|k_{1,2}|\leq 1$. Then if two points $p$ and $q$ have the same outer tangent horosphere $H$, then $H\cap\widetilde{\Sigma}$ contains the geodesic in $\widetilde{\Sigma}$ joining $p$ to $q$. Moreover, this geodesic segment is also a geodesic in $H$ and a line of curvature of $\widetilde{\Sigma}$.
\end{prop}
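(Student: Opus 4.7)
The plan is to test the signed distance function $f$ from $H$, normalized so that $f \ge 0$ on the side of $H$ opposite the horoball, along a unit-speed geodesic $\gamma:[0,L]\to\widetilde\Sigma$ joining $p$ to $q$ in the induced metric. Since $\widetilde\Sigma$ does not enter the horoball, $f|_{\widetilde\Sigma}\ge 0$ globally, and $u(t):=f(\gamma(t))$ is a nonnegative function on $[0,L]$ vanishing at the endpoints. The main claim is that $u\equiv 0$, which gives $\gamma\subset H$.

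For the computation, recall that $f$ coincides with a Busemann function up to a constant, so $|\nabla f|_{\HH^3}\equiv 1$ and the classical identity $\mathrm{Hess}_{\HH^3}(f)=g-df\otimes df$ holds; since $\gamma$ is a $\widetilde\Sigma$-geodesic, $\nabla^{\HH^3}_{\gamma'}\gamma'=II_\Sigma(\gamma',\gamma')\,n_\Sigma$. Combining,
\[
u''(t) \;=\; \mathrm{Hess}_{\HH^3}(f)(\gamma',\gamma') + df\bigl(\nabla^{\HH^3}_{\gamma'}\gamma'\bigr)\;=\; 1-(u')^2 + II_\Sigma(\gamma',\gamma')\,g(\nabla f, n_\Sigma).
\]
The hypothesis $|k_{1,2}|\le 1$ gives $|II_\Sigma(\gamma',\gamma')|\le 1$, and $|g(\nabla f, n_\Sigma)|\le 1$ because both factors are unit vectors, so $u''\ge -(u')^2$, equivalently $(e^u)''\ge 0$. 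Convexity of $e^u$ on $[0,L]$ with $e^{u(0)}=e^{u(L)}=1$ yields $e^u\le 1$, i.e.\ $u\le 0$; combined with $u\ge 0$ this forces $u\equiv 0$, so $\gamma\subset H$.

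Finally, for each $t\in[0,L]$ the point $\gamma(t)$ is a global minimum of $f|_{\widetilde\Sigma}$, so $df$ vanishes on $T_{\gamma(t)}\widetilde\Sigma$. Since $T_{\gamma(t)}H=\ker df|_{\gamma(t)}$ is also two-dimensional, the tangent planes agree along $\gamma$ and $n_\Sigma=\pm n_H$. Decomposing $\nabla^{\HH^3}_{\gamma'}\gamma'$ with respect to the splitting of $T\HH^3$ along $H$, the tangential component $\nabla^H_{\gamma'}\gamma'$ must vanish because the parallel $\widetilde\Sigma$-decomposition is purely normal; hence $\gamma$ is a geodesic of $H$, i.e.\ a horocycle of $\HH^3$. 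Matching normal components gives $|II_\Sigma(\gamma',\gamma')|=|II_H(\gamma',\gamma')|=1$, so $\gamma'$ realizes the extremal value of the shape operator and is a principal direction of $\widetilde\Sigma$. The main obstacle is to organize the curvature bound and the Cauchy--Schwarz estimate into the clean convexity statement $(e^u)''\ge 0$; once that is in hand, the rest is tangent-space bookkeeping along the coincidence locus $\{f|_{\widetilde\Sigma}=0\}$.
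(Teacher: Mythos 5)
Your proof is correct but follows a genuinely different route from the paper's. The paper argues by contradiction with a geometric barrier: assuming the $\widetilde{\Sigma}$-geodesic $\gamma$ from $p$ to $q$ leaves $H$, take a disk $D\subset H$ containing $H\cap\gamma$ in its interior and sweep the pencil of geodesic spheres through $\partial D$ toward $\gamma$; the first to touch $\gamma$ does so tangentially at an interior point of $\gamma$, but geodesic spheres have normal curvature $\coth r>1$, violating the bound $|II_{\widetilde{\Sigma}}(\gamma',\gamma')|\le 1$. You instead convert the problem to a one-variable ODE comparison: using the Busemann Hessian identity $\mathrm{Hess}(f)=g-df\otimes df$ for the signed distance to the horosphere, together with $\nabla^{\HH^3}_{\gamma'}\gamma'=II_{\widetilde{\Sigma}}(\gamma',\gamma')\,n_\Sigma$ and Cauchy--Schwarz, you derive $u''\ge -(u')^2$, i.e.\ convexity of $e^{u}$, which together with $u\ge0$ and $u(0)=u(L)=0$ forces $u\equiv 0$. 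Both are maximum-principle arguments in spirit, but yours is self-contained and uses the sharp comparison with the horosphere's own curvature $\equiv 1$ rather than spheres of curvature strictly above $1$. It also makes explicit, as a by-product of the minimality of $f|_{\widetilde{\Sigma}}$ along $\gamma$, the tangent-plane coincidence $T_{\gamma(t)}\widetilde{\Sigma}=T_{\gamma(t)}H$, which the paper's passage from $\gamma\subset H$ to the ``geodesic in $H$ / line of curvature'' conclusions leaves implicit. Your closing bookkeeping, matching the two second-fundamental-form decompositions of $\nabla^{\HH^3}_{\gamma'}\gamma'$ to get $\nabla^{H}_{\gamma'}\gamma'=0$ and $|II_{\widetilde{\Sigma}}(\gamma',\gamma')|=1$, is correct.
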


\begin{proof}
Take a geodesic segment $\gamma\subset \widetilde \Sigma$ joining $p$ to $q$. Then the geodesic curvature of $\gamma$ is also bounded in norm by $1$. Suppose for sake of contradiction that $\gamma$ is not contained in $H$. Take in $H$ a disk $D$ that contains $H\cap\gamma$ in its interior. Consider the family of geodesic balls whose intersection with $H$ is $\partial D$ and which contain $\gamma$ in their interiors. By construction, the first geodesic ball to make contact with $\gamma$ will do so tangentially at an interior point of $\gamma$. But geodesic balls have curvature above one, which contradicts the curvature bound on $\gamma$. From this we conclude that $\gamma$ is contained in $H$.

Since the principal curvatures of $H$ are equal to $1$ (in particular $H$ is umbilic), any curve that is not a geodesic will have curvature above one. Hence then $\gamma$ is also a geodesic in $H$, so it has curvature equal to $1$ at all points. This means that in the direction of $\gamma'$ the surface $\widetilde{\Sigma}$ has curvature $1$. Since this is the maximum possible value for a curvature on $\widetilde{\Sigma}$, then $\gamma$ always points in a direction of maximal curvature.
\end{proof}

In the proof of Theorem \ref{thm:drilling_a_curve}, we see as a consequence of Proposition \ref{prop:parabolic_curve} that if $\Sigma\subset M$ has principal curvatures $|k_{1,2}|\le 1$ and $M$ has a parabolic cusp represented by a closed curve homotopic into $\Sigma$, then the geodesic representative of that curve in $\Sigma$ is a horocyclic segment.

\section{Mapping torus 3-manifolds with small geodesics}\label{sec:mapping torus}
In this section, we bound the number of orthogonal intersections of closed curves in a finite type surface equipped with a metric of negative curvature and finite area.  Together with Proposition \ref{prop:parabolic_curve} and the results of Section \ref{sec:minimizers}, we then use this bound to produce families of area minimizing minimal surfaces in fibered hyperbolic $3$-manifolds with maximum principal curvature bounded uniformly away from $1$.  We do this by drilling out families of curves homotopic to a fiber which would be forced to intersect orthogonally on a limit of area minimizing minimal surfaces with maximum principal curvatures tending to $1$.

A \emph{curve system} $\mathcal C\subset \Sigma$ is a non-empty collection of homotopically distinct essential simple closed curves on $\Sigma$.  We say two curve systems $\mathcal C_1, \mathcal C_2\subset \Sigma$ \emph{fill} a homotopically essential subsurface $F=F(\mathcal C_1, \mathcal C_2)\subset\Sigma$ if every essential closed curve in $F$ has an essential intersection with a component of $\mathcal C_1$ or $\mathcal C_2$. 

Let $S_{g,n}$ be an oriented surface of genus $g$, $n$ boundary components or punctures, and negative Euler Characteristic.  Let $m$ be a Riemannian metric of finite area on $S_{g,n}$ so that each boundary component is totally geodesic and with pinched negative curvature, i.e. there are $a,b>0$ with $-a\le\kappa(m)\le -b$.  
The following bounds are obtained by combinatorial Euler characteristic arguments and several applications of Gauss-Bonnet.
\begin{lem}\label{lem:intersections}
Let $\mathcal C_1$ and $\mathcal C_2$ each be curve systems represented by geodesics in $S_{g,n}$ not corresponding to boundary components and sharing no common curves.  Suppose $\mathcal C_1\cup \mathcal C_2$ is connected and every point of intersection $\mathcal C_1 \cap \mathcal C_2$ is orthogonal.  Then the total intersection satisfies \[|\mathcal C_1 \cap \mathcal C_2|\le(\frac{2a}b+1)|\chi(F(\mathcal C_1, \mathcal C_2)|\le(\frac{2a}b+1)|\chi(S_{g,n})|.\]
\end{lem}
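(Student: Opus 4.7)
The plan is to combine a CW decomposition of $F = F(\mathcal C_1, \mathcal C_2)$ with several applications of Gauss--Bonnet, exploiting the bipartite structure at each crossing. First, I would view $G := \mathcal C_1 \cup \mathcal C_2$ as a graph embedded inside $F$, which I take with totally geodesic boundary. The vertices of $G$ are precisely the points of $\mathcal C_1 \cap \mathcal C_2$; each is the transverse orthogonal crossing of a curve of $\mathcal C_1$ with a curve of $\mathcal C_2$, so every vertex has valence $4$. Writing $V = |\mathcal C_1 \cap \mathcal C_2|$, this gives $E = 2V$ edges by the handshake lemma. Since the two systems fill $F$, every component of $F \setminus G$ is either a topological disk or an annular collar of a component of $\partial F$; writing $N$ for the number of disk components, the CW relation $V - E + N = \chi(F)$ rearranges to $V = N + |\chi(F)|$. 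The task is then to bound $N$.

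Next, I would apply Gauss--Bonnet to each disk face $D$. Its boundary $\partial D$ consists of geodesic arcs meeting at corners of interior angle $\pi/2$. The key combinatorial observation is that at each vertex of $G$ the two incident $\mathcal C_1$-edges are collinear (being pieces of the same geodesic of $\mathcal C_1$) and likewise for $\mathcal C_2$, so the four face-corners around the vertex each separate one $\mathcal C_1$-edge from one $\mathcal C_2$-edge. Consequently, the edges along $\partial D$ alternate between $\mathcal C_1$ and $\mathcal C_2$, forcing the number $k$ of sides of $D$ to be even. Combined with the negative-curvature inequality
\[\int_D K\,dA \;=\; 2\pi - k\pi/2 \;<\; 0,\]
which forces $k\ge 5$, we conclude $k\ge 6$. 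The pinching $K \ge -a$ together with Gauss--Bonnet on $D$ then yield $\Area(D) \ge (k-4)\pi/(2a) \ge \pi/a$.

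Finally, I would apply Gauss--Bonnet globally to $F$: with geodesic boundary we get $\int_F K\,dA = 2\pi\chi(F)$, and the other pinching bound $K \le -b$ gives $\Area(F) \le 2\pi|\chi(F)|/b$. Summing the per-face area estimate over the $N$ disk faces yields
\[\tfrac{N\pi}{a} \;\le\; \Area(F) \;\le\; \tfrac{2\pi|\chi(F)|}{b},\]
hence $N \le \tfrac{2a}{b}|\chi(F)|$, and substituting into $V = N + |\chi(F)|$ produces the desired bound $V \le (\tfrac{2a}{b}+1)|\chi(F)|$. The second inequality $|\chi(F)| \le |\chi(S_{g,n})|$ is immediate since $F$ is an essential subsurface. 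The main subtle point is the alternation observation forcing $k$ to be even: without it, the naive lower bound $k\ge 5$ gives only $\Area(D)\ge\pi/(2a)$ and hence a weaker constant $\tfrac{4a}{b}+1$. The remaining obstacle is careful Euler-characteristic bookkeeping so that all non-disk complementary regions (annular collars of $\partial F$) are correctly accounted for, and so that $F$ can be replaced by a homotopic representative with geodesic boundary for the clean Gauss--Bonnet identity.
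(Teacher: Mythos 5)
Your proposal is correct and follows essentially the same route as the paper: cut $F$ along the $4$-valent graph $\mathcal C_1\cup\mathcal C_2$, use the resulting cell structure to write $|\mathcal C_1\cap\mathcal C_2| = |\chi(F)| + D$ with $D$ the number of disk faces, and bound $D$ by comparing Gauss--Bonnet area estimates (the alternation/evenness observation gives each disk face at least six sides, hence area at least $\pi/a$, against $\Area(F)\le 2\pi|\chi(F)|/b$). The one small wrinkle, which you flagged yourself, is that $F\setminus(\mathcal C_1\cup\mathcal C_2)$ can also contain once-punctured-disk components when $F$ has cusps, not only annular collars of $\partial F$; the paper enlarges the graph to a genuine cellulation to handle these, whereas your relation $V-E+N=\chi(F)$ still gives the right answer because annuli and once-punctured disks each contribute $0$ to the face sum, though calling it a ``CW relation'' is a slight misnomer.
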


\begin{proof}
We have assumed that $\mathcal C_1\cup \mathcal C_2$ is connected, so $\mathcal C_1\cup\mathcal C_2$ fills some homotopically essential connected subsurface $F\subset S_{g,n}$; that is, $F$ is the smallest subsurface so that $F\setminus \mathcal C_1\cup \mathcal C_2$ is a collection of disks with at most one puncture and boundary parallel annuli.  Our assumptions tell us that $F$ is not an annulus and $\chi(F)<0$, because some component of $\mathcal C_1$ meets a component of $\mathcal C_2$ transversely (so $\pi_1(F)$ is non-elementary); we may realize $F$ in $S_{g,n}$ so that its boundary (if any) is totally geodesic.

We consider the components  $\{F_1, ..., F_k\}= F\setminus (\mathcal C_1\cup \mathcal C_2)$. Each has right angled piecewise geodesic polygonal boundary $P_i$.  The edges of $P_i$ alternate between segments of curves of $\mathcal C_1$ and $\mathcal C_2$, so that the number of sides $|P_i|=2p_i$ is even.  A compact annular component has only one polygonal boundary component; the other is totally geodesic and smooth.  

The decomposition of $F$ as  $F\setminus (\mathcal C_1\cup\mathcal C_2)$ almost gives us a cellulation of $F$; we add in an (open) edge joining each puncture to a vertex of its polygonal boundary as well as a vertex on the boundary components of a compact annular component and an edge connecting it to a vertex on its polygonal boundary.  Now we compute $\chi(F)$ in terms of our cellulation.   A disk or punctured disk component $F_i$ contributes $2p_i$ vertices and each compact annular compononent $F_i$ contributes $2p_i+1$ vertices, but each vertex corresponding to a point of intersection of $\mathcal C_1\cap \mathcal C_2$ is counted $4$ times.  This is because the cells all meet at right angles.  If $A$ is the number of compact annular components, then the number $v$ of vertices is given by 
\[v = \sum_{i=1}^k\frac{2p_i}4 +A =|\mathcal C_1\cap \mathcal C_2|+A. \]  
Similarly, if $C$ is the number of cusps components, we compute the number $e$ of edges
\[e = 2|\mathcal C_1\cap \mathcal C_2|+2A+C.\]
The number of faces is $k= A + C +D$, where $D$ is the number of disks, so that
\[|\mathcal C_1\cap \mathcal C_2| -D= -v+e-f = -\chi(F),\]
and in particular, \[|\mathcal C_1\cap \mathcal C_2|= |\chi(F)| +D.\]  

We just need to bound the number $D$ of disks. Reordering if necessary (when there are annular components of $F\setminus (\mathcal C_1\cup \mathcal C_2)$), they are $\{F_1, ..., F_D\}$.
We note that $\sum_{i =1}^D\Area(F_i)\le \Area(F)$.  Using the upper bound $-b$ on curvature and the fact that the boundary of $F$ is totally geodesic, Gauss--Bonnet yields
\[ \Area (F) \le \frac{2\pi|\chi(F)|}b. \]

Using the lower curvature bound, for any $F_i$, Gauss--Bonnet gives
\[\frac\pi2|P_i|-2\pi\chi(F_i)\le a\Area(F_i). \]
The universal cover of $S_{g,n}$ is contractible and negatively curved, so there are no right angled $4$-gons.  Thus, if $F_i$ is a disk, then $p_i\ge 3$ and $\chi(F_i) = 1$.  If $F_i$ is annular, then $\chi(F_i) = 0$ and $p_i\ge 2$.  Thus
\[\Area (F_i)\ge \frac\pi a (p_i-2\chi(F_i))\ge\pi/a.\]

Now summing over disks $\{F_1, ..., F_D\}$  we obtain $\Area(F)\ge D\pi/a$.  Combining this with our upper bound on $\Area(F)$, we get
\[D\le \frac{2a|\chi(F)|}b.\]

Note that that $|\chi(F)|\le |\chi(S_{g,n})| =|2g-2+n|$, so that combining the above estimate with $|\mathcal C_1\cap \mathcal C_2|= D+|\chi(F)|$ completes the proof of the lemma.
\end{proof}

Note that this bound is sharp: if $S$ is a closed surface of genus $g$ and has a metric of constant curvature equal to $1$, then one can build a surface with a pants decomposition and `dual' pants decomposition where all curves meet at right angles, and the union is connected.  The number of intersections of these two pants decompositions is $6(g-1) = 3|\chi(S)|$.

We now construct infinite families of closed hyperbolic $3$-manifolds fibered over the circle such that any area minimizing minimal surface homotopic to a fiber (of a particular fibration) have maximal principal curvatures bounded strictly away from $1$ in absolute value.  
Our approach is consider a family of surface bundles obtained by Dehn filling on a finite volume hyperbolic manifold with cusps.  For filling with large enough slope, we know that area minimizing minimal surfaces must stay away from the tubes that degenerate to cusps.  Thus the thin parts act as barriers to area minimizing surfaces of bounded genus.

Let $\psi:\Sigma\to \Sigma$ be a pseudo-Anosov mapping class.  For a curve system $\mathcal C\subset \Sigma$, we say that $(\psi, \mathcal C)$ \emph{intersects enough} if
\begin{itemize}
    \item $\mathcal C$ and $\psi(\mathcal C)$ share no common curves; and
    \item $F = F(\mathcal C, \psi(\mathcal C))\subset \Sigma$ is connected; and
    \item $i(\mathcal C, \psi(\mathcal C))\ge 5|\chi(F)|$.
\end{itemize}
Note that for some power of $\psi$, any non-empty curve system satisfies the above properties with $F = \Sigma$.  

We take a simple closed curve $\mathcal C =\{\gamma\}$ such that $(\psi, \mathcal C)$ intersects enough.  Further, we require that the geodesic representative $\mathcal C^*$ of $\mathcal C$ in the hyperbolic metric on the mapping cylinder $C_\psi$ is unknotted, i.e. isotopic into the fiber. Note that postcomposing $\psi$ by a large Dehn twist $T_{\mathcal C}^{N}$ in $\mathcal C$, we can assume that $\mathcal C^*$ is very short, hence uknotted by an argument of Otal \cite{Otal03}.

We consider the open manifold $M = C_\psi\setminus \mathcal C^*$.  By Thurston's hyperbolization theorem, $M$ admits a (unique) complete hyperbolic metric of finite volume, and Dehn fillings $M_{\psi, k}$ with slopes $k$ near enough to infinity converge geometrically to $M$.  Moreover, $M_{\psi,k}$ can be realized as the mapping cylinder of $T_{\mathcal C}^k\circ \psi$, hence fibers over $S^1$ with fiber $\Sigma$.

\begin{theorem}\label{thm:drilling_a_curve}
Let $M_{k} = M_{\psi, k}$ be as above with $ k\to  \infty$, and let $\Sigma_k\subset M_k$ be area minimizing minimal surfaces in the homotopy class of the fiber $\Sigma\hookrightarrow M_k$.  Then there exist $K_0$ and $\mu >1$ such that for $k\ge K_0$, the maximum principal curvature of $\Sigma_k$ is bounded below by $\mu$. 
\end{theorem}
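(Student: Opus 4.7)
The plan is to argue by contradiction and exploit parabolic geometry in a geometric limit. Suppose no such $\mu > 1$ exists; then along a subsequence the maximum principal curvature of $\Sigma_k$ tends to $1$ from above, by Uhlenbeck's lower bound. First I would use Theorem \ref{thm:cusprepelling} to keep each $\Sigma_k$ within distance $2d$ of the $\epsilon_3$-thick part of $M_k$; combined with $M_k \to M$ geometrically this confines the $\Sigma_k$ to a fixed compact subset of $M$ uniformly in $k$. The $\pi_1$-injectivity of $\Sigma_k$ yields a uniform lower bound on its intrinsic injectivity radius, and together with the fixed genus, Theorem \ref{thm:bounded_geometry_converge} produces a smooth subsequential limit $\Sigma_\infty \hookrightarrow M$, a stable minimal immersion in the fiber homotopy class with $|k_{1,2}| \leq 1$ everywhere.

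Next I would locate horocyclic structure on $\Sigma_\infty$. Both $\gamma$ and $\psi(\gamma)$ are freely homotopic in $C_\psi$ to the drilled curve $\mathcal{C}^*$, so their images under the holonomy $\pi_1(\Sigma_\infty) \to \mathrm{Isom}(\mathbb{H}^3)$ are distinct parabolic isometries fixing distinct ideal points of $\partial \mathbb{H}^3$. The remark after Proposition \ref{prop:parabolic_curve} then says the induced-metric geodesic representatives $\gamma^{*}, \psi(\gamma)^{*} \subset \Sigma_\infty$ lift to horocycles in $\mathbb{H}^3$ centered at these fixed points, while Proposition \ref{prop:parabolic_curve} itself tells us that along each arc the tangent direction is a principal direction of $\Sigma_\infty$. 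Since these arcs are geodesics of $\Sigma_\infty$ but horocycles (curvature $1$) in $\mathbb{H}^3$, their normal curvature in $\Sigma_\infty$ is $\pm 1$, which together with $|k_{1,2}| \leq 1$ and minimality forces $k_1 = 1$, $k_2 = -1$ at each such point --- so the shape operator is non-umbilic with orthogonal principal directions.

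At a crossing $p \in \gamma^{*} \cap \psi(\gamma)^{*}$, both tangent lines are then principal lines at the non-umbilic point $p$. If they were the same principal line, the two geodesics of $\Sigma_\infty$ would share a tangent at $p$ and agree locally by ODE-uniqueness, forcing their lifts to lie on a common horocycle in $\mathbb{H}^3$ --- impossible since the two parabolic fixed points differ. So the two tangent lines must be the two distinct orthogonal principal directions, and every intersection of $\gamma^{*}$ with $\psi(\gamma)^{*}$ is perpendicular. Now the Gauss equation gives $K_{\Sigma_\infty} = -1 - k^2 \in [-2, -1]$, so Lemma \ref{lem:intersections} applies with $a = 2$, $b = 1$ and yields $i(\gamma, \psi(\gamma)) = |\gamma^{*} \cap \psi(\gamma)^{*}| \leq 5 |\chi(F)| \leq 5|\chi(\Sigma)|$, contradicting the intersects-enough hypothesis (strictly, either via the strict form $i > 5|\chi(\Sigma)|$ of Theorem \ref{thm:intro1}, or by noting that $K = -2$ at the intersection points sharpens the lemma's area bound to a strict inequality).

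The hard part will be the middle step: justifying that the closed geodesic representatives on $\Sigma_\infty$ of the peripheral curves $\gamma$ and $\psi(\gamma)$ really do lift to distinct horocycles centered at the parabolic fixed points of the limit holonomy, and that Proposition \ref{prop:parabolic_curve} applies with the appropriate outer tangent horosphere at every point along them. These facts rely on the parabolic dynamics of the limit representation and the $\pi_1$-injectivity of $\Sigma_\infty$ inherited from the $\Sigma_k$; once they are in hand, the orthogonality analysis and intersection count are quick.
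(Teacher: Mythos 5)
Your overall strategy is the same as the paper's: argue by contradiction, use Theorem \ref{thm:cusprepelling} and Theorem \ref{thm:bounded_geometry_converge} to extract a smooth limit $\Sigma_\infty\subset M$ with $|k_{1,2}|\le 1$, locate horocyclic lines of curvature on $\Sigma_\infty$ in the homotopy classes of $\mathcal C$ and $\psi(\mathcal C)$, deduce that all intersections are orthogonal, and contradict Lemma \ref{lem:intersections}.

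The gap you flag at the end is exactly where the paper does the remaining work, and it is a genuine gap in your write-up: the remark following Proposition \ref{prop:parabolic_curve} only announces the conclusion as a \emph{consequence of} this theorem's proof, so it cannot be invoked to supply it. The paper's argument runs as follows. In the $\Gamma$-cover $\widetilde M\cong\Sigma\times\RR$, pass to $\HH^3$ and let $z$ be the fixed point of a parabolic $\gamma\in\Gamma$ representing $\mathcal C$. Because $\Sigma_\infty$ avoids the $\epsilon$-thin part, $\widetilde\Sigma_\infty$ misses a horoball at $z$, so among horoballs at $z$ there is a smallest one whose bounding horosphere $H$ meets $\widetilde\Sigma_\infty$; tangency is automatic and $\langle\gamma\rangle$-invariance keeps the touching point $p$ in a compact region so the infimum is attained. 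Then $\gamma.p\in \widetilde\Sigma_\infty\cap H$ as well, and Proposition \ref{prop:parabolic_curve} applied to $p,\gamma.p$ forces the $H$-geodesic segment $[p,\gamma.p]$ to lie in $\widetilde\Sigma_\infty\cap H$, to be a geodesic of $\Sigma_\infty$ (hence \emph{the} geodesic representative of $\gamma$), and to be a line of curvature. That is the content your argument defers.

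Two minor points where you diverge, both harmlessly. First, for orthogonality at a crossing, the paper observes that (after orienting $\Sigma_\infty$) $\mathcal C$ and $\psi(\mathcal C)$ sit on opposite conformal boundaries of $\widetilde M$, so the two lines of curvature carry principal curvatures of opposite sign $\pm1$, hence correspond to the two distinct principal directions at a non-umbilic point. Your ODE-uniqueness argument reaches the same conclusion more elementarily without appealing to the ``top/bottom'' dichotomy. Second, your parenthetical about strictness in the final step is actually a point the paper glosses over: Lemma \ref{lem:intersections} with $a=2$, $b=1$ gives $|\gamma^*\cap\psi(\gamma)^*|\le 5|\chi(F)|$ while ``intersects enough'' asks only $\ge 5|\chi(F)|$, so one does need either the strict hypothesis of Theorem \ref{thm:intro1} or the observation that a non-totally-geodesic minimal surface cannot have constant Gauss curvature, which makes the Gauss--Bonnet estimates in Lemma \ref{lem:intersections} strict. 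You are right to notice this.
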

\begin{proof}
By Theorem \ref{thm:cusprepelling}, $\Sigma_k$ stays in the complement of  $M_k^{<\epsilon}$ for some $\epsilon\le \epsilon_3$ and $k\ge K_0$ large enough.  Since $\Sigma_k$ is incompressible and $1$-Lipschitz, $\epsilon$ provides a lower bound on the injectivity radius of $\Sigma_k$.  Assume for sake of contradiction that some subsequence (with the same name) has maximum principal curvatures tending to $1$.  

By Theorem \ref{thm:bounded_geometry_converge}, there is a stable minimal surface $\Sigma_{\infty}\subset M$ such that, up to subsequence (again with the same name) $\Sigma_k\xrightarrow{C^\infty}\Sigma_\infty$, and all principal curvatures of $\Sigma_\infty$ are bounded in absolute value by $1$.  Clearly, $\Sigma_\infty$ does not enter the $\epsilon$-thin part of $M$.  %Using $C^\infty$ approximation maps $\phi_k: M^{>\epsilon}\to M_k^{>\epsilon}$ for large enough $k$, we can find minimal surfaces $\Sigma_k' \subset M_k$ (which coincide with $\Sigma_k$ for infinitely many $k$) such that $\Sigma_k'\xrightarrow{C^\infty}\Sigma_\infty$. 

%For sake of contradiction, we assume that the largest principal curvatures of $\Sigma_k'$ tend to $1$. Then $\Sigma_\infty$ has all principal curvatures bounded in absolute value by $1$.
The subgroup $\Gamma\le \pi_1 M$ corresponding to $\Sigma_\infty$ defines a covering space $\widetilde M$ to which $\Sigma_\infty$ lifts isometrically to a homotopy equivalence.  Moreover, $\widetilde M\cong \Sigma\times \RR$ is geometrically finite; by changing the orientation of $\Sigma_\infty$ if necessary, $\mathcal C$ and $\psi(\mathcal C)$ correspond to rank-$1$ parabolic cusps in the `top' and  `bottom' conformal surfaces at infinity, respectively.  %That is, curves corresponding to  $\mathcal C$ and $\psi(\mathcal C)$ on $\Sigma_\infty\subset M_\Gamma$ correspond to conjugacy classes of parabolic elements of $\Gamma$.  \textcolor{red}{Is there an issue with `wrapping' here?  Do we really know that all curves of $\mathcal C$ are on the top and all curves of $\psi(\mathcal C)$ are on the bottom?  Could $\Sigma_\infty$ weave in and out?}

Now we lift the situation to universal covers to obtain a $\Gamma$-equivariant embedded minimal surface $\widetilde \Sigma_\infty\subset \HH^3$.  Suppose $\gamma\in \Gamma$ represents (the conjugacy class of ) $\mathcal C$ and let $z$ be the parabolic fixed point of $\gamma$.  There is a smallest horoball such that the horosphere $H$ comprising its closure meets $\widetilde \Sigma_\infty$ in a set containing a point $p$.  By equivariance, $\gamma. p\in \widetilde \Sigma_\infty\cap H$, so by Proposition \ref{prop:parabolic_curve}, the intersection  $\widetilde{\Sigma}_\infty\cap H$ is \begin{itemize}
    \item the $H$-geodesic line in $H$ joining $p$ to $\gamma. p$; and
    \item a lift of the the geodesic in $\widetilde{\Sigma}_\infty$ in the homotopy class of $\gamma\subset \Sigma_\infty$; and
    \item a line of curvature of $\Sigma_\infty$.
\end{itemize}

Now, we apply the same argument for $\psi_*\gamma$ representing  $\psi(\mathcal C)$ to see that any lift of the geodesic representative of $\psi_*\gamma$ in $\Sigma_\infty$ is a line of curvature (with curvature of opposite sign).  It follows that all points of intersection of the $\Sigma_\infty$-geodesic representatives $\mathcal C$ and $\psi(\mathcal C)$ are orthogonal.

However, $(\psi, \mathcal C)$ intersects enough, which contradicts Lemma \ref{lem:intersections}.  This contradiction provides a lower bound $\mu>1$ for the maximum curvature of the local area minimizers $\Sigma_k \subset M_k$ along any subsequence.
\end{proof}

\begin{remark}
Theorem \ref{thm:drilling_a_curve} clearly holds under less restrictive hypotheses.  Namely, one can drill any curve system $(\psi,\mathcal C)$  that intersects enough (or multiple curve systems) as long as one can assure that the rank-$1$ parabolic curves on the two conformal boundaries of the cover associated to any geometric limit $\Sigma_\infty$ have enough (orthogonal) intersections, as in Lemma \ref{lem:intersections}.  Note that as soon as $\mathcal C$ has at least $2$ components, then there are different homotopy classes of embeddings into $M$ which map to fibers of the approximating manifolds $M_k$, which `weave through' the rank-$2$ cusps changing which curves are on `top' and `bottom' in the corresponding covering space.
\end{remark}

In our next application, we show how a short curve can act as a barrier to find more than one local area minimizing minimal surface in the same homotopy class with maximum prinicpal curvature bounded away from $1$.
\begin{theorem}
Let $\psi: \Sigma \rightarrow \Sigma$ a pseudo-Anosov map, $C_\psi=\Sigma\times[0,1]/{(x,0)\sim(\psi(x),1)}$ its mapping torus, and $\alpha\subseteq \Sigma\times\lbrace0\rbrace,\beta\subset\Sigma\times\lbrace\frac12\rbrace$ two simple closed curves that have non-trivial intersection when homotoped to a single copy of $\Sigma$. Denote by $M_0=C_\psi\setminus\lbrace\alpha,\beta\rbrace$. Then for $n$ sufficiently large, the slope-$n$ Dehn-surgery around $\beta$, denoted by $M_n$, has the property that for any sequence of pseudo-Anosov maps $\psi_k:\Sigma\rightarrow\Sigma$ so that $C_{\psi_k}\overset{geom}{\longrightarrow} M_n$, there exists a uniform constant $\mu(n)>1$ so that $\mu$ is a lower bound for the maximal principal curvature on any area minimizer of $C_{\psi_k}$ in the homotopy class of $\Sigma$.
\end{theorem}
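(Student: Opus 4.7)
The plan is to argue by contradiction via a diagonal limit that reduces the problem to the configuration analyzed in Theorem \ref{thm:drilling_a_curve}. Suppose the conclusion fails for arbitrarily large $n$: for each such $n$ we may choose a sequence of pseudo-Anosov maps $\psi_k^n$ with $C_{\psi_k^n}\overset{geom}{\longrightarrow}M_n$ and area minimizers $\Sigma_k^n\subset C_{\psi_k^n}$ in the fiber class whose largest principal curvatures tend to $1$ as $k\to\infty$. Since $M_n\overset{geom}{\longrightarrow}M_0$ by Thurston's hyperbolic Dehn surgery theorem, a diagonal choice $\psi_n=\psi_{k(n)}^n$ produces a sequence of closed fibered hyperbolic manifolds $C_{\psi_n}\overset{geom}{\longrightarrow} M_0$ whose corresponding area minimizers $\Sigma_n$ still have maximum principal curvatures tending to $1$.

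Next I would verify this diagonal sequence is tame enough to extract a smooth limit in $M_0$. Both $\alpha$ and $\beta$ are rank-$2$ parabolic cusps of $M_0$, so Theorem \ref{thm:cusprepelling} forces $\Sigma_n$ to remain uniformly far from the $\alpha$- and $\beta$-thin parts of $C_{\psi_n}$. Incompressibility of $\Sigma_n$ then gives a uniform lower bound on intrinsic injectivity radius, and Theorem \ref{thm:bounded_geometry_converge} produces a subsequential smooth limit $\Sigma_n\overset{C^\infty}{\longrightarrow}\Sigma_\infty\subset M_0$, which is a stable minimal surface with $|k_{1,2}|\le 1$.

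I would then translate this into parabolic data. Because the $\Sigma_k^n$ are homotopic to the fiber, the cover $\widetilde{M}_0\to M_0$ associated to $\pi_1(\Sigma_\infty)$ is the fiber-subgroup cover corresponding to $\pi_1(\Sigma)\le\pi_1(M_0)$. In that cover, every iterate $\psi^m(\alpha)$ and $\psi^m(\beta)$ represents a rank-$1$ parabolic class, since it is conjugate in $\pi_1(M_0)$ to $\alpha$ or $\beta$ via the monodromy element $t^m$. Proposition \ref{prop:parabolic_curve} then shows that the geodesic representatives on $\Sigma_\infty$ of $\alpha$ and of $\psi^N(\beta)$ are horocyclic arcs and lines of curvature; at each intersection they must lie along distinct principal directions, so they cross orthogonally.

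Finally, iteration by the pseudo-Anosov $\psi$ supplies the slack needed for Lemma \ref{lem:intersections}. Since $i(\alpha,\psi^N(\beta))$ grows exponentially in $N$, for $N$ sufficiently large we have $i(\alpha,\psi^N(\beta))>5|\chi(\Sigma)|$, while $\alpha$ and $\psi^N(\beta)$ are distinct simple closed curves with connected union; Lemma \ref{lem:intersections} then forces $i(\alpha,\psi^N(\beta))\le 5|\chi(\Sigma)|$, a contradiction. The main obstacle I anticipate is the diagonal limit itself: one must confirm that the limit surface $\Sigma_\infty$ extracted across both the geometric convergence $C_{\psi_k^n}\to M_n$ and the Dehn-surgery limit $M_n\to M_0$ inherits the bound $|k_{1,2}|\le 1$, which comes down to the stability of the $C^\infty$-limit under double limits together with the uniform bounded-geometry estimate provided by Theorem \ref{thm:cusprepelling}.
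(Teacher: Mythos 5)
Your proposal takes a genuinely different route from the paper, and it has a gap at its central step. The paper does not pass to the double geometric limit $M_0$ at all. Instead it exploits the fact that in $M_0=C_\psi\setminus\{\alpha,\beta\}$ the fiber has \emph{two} distinct homotopy classes of embedding, $\Sigma\times\{\tfrac14\}$ and $\Sigma\times\{\tfrac34\}$, separated by the cyclic order with the two drilled curves. By \cite{FreedmanHassScott83} each class carries an area minimizer. By Theorem \ref{thm:minimizersurvives}, for $n$ large these survive as local area minimizers in $M_n$, where they now lie in the \emph{same} homotopy class. Item \eqref{item:maxprinciple} of Theorem \ref{thm:AF} says a closed minimal surface with $|k_{1,2}|\le 1$ is the unique closed minimal surface in its $\pi_1(\Sigma)$-cover; the presence of two distinct ones forces every closed minimal surface in that homotopy class in $M_n$ to have some principal curvature strictly greater than $1$. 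Finally, Theorem \ref{thm:cusprepelling} and Theorem \ref{thm:bounded_geometry_converge} force the area minimizers in $C_{\psi_k}$ to converge smoothly to such a surface, giving a uniform $\mu(n)>1$. No intersection count enters the argument.

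The gap in your argument is the assertion that $\psi^N(\beta)$ contributes a rank-$1$ parabolic cusp to the cover $\widetilde M_0$ associated to $\pi_1(\Sigma_\infty)$. The cover $\widetilde M_0\cong\Sigma\times\RR$ is geometrically finite (the base $M_0$ has finite volume and hence no geometrically infinite ends, so by the covering theorem neither does the cover), and therefore has only finitely many conjugacy classes of primitive parabolics. As in the proof of Theorem \ref{thm:drilling_a_curve}, the parabolics that appear on the two conformal boundary surfaces ``adjacent'' to a lift of $\Sigma\times\{\tfrac14\}$ are precisely $\alpha$ (below) and $\beta$ (above); the higher iterates $\psi^m(\alpha)$, $\psi^m(\beta)$ that you invoke are \emph{not} parabolic in $\pi_1(\Sigma)\le\pi_1(M_0)$, despite their images being conjugate to $\alpha$ or $\beta$ in $\pi_1(C_\psi)$, because the conjugating monodromy lift differs from an element of $\pi_1(\Sigma)$ by a nontrivial word in the meridians of $\alpha,\beta$ in $\pi_1(M_0)$. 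Indeed, if all those iterates were parabolic you would produce infinitely many parabolic conjugacy classes in a geometrically finite surface group, a contradiction. Once you are restricted to the genuine parabolics $\alpha$ and $\beta$, Proposition \ref{prop:parabolic_curve} does give that their $\Sigma_\infty$-geodesics are orthogonal lines of curvature, but the hypothesis only guarantees $i(\alpha,\beta)\ge 1$, while Lemma \ref{lem:intersections} gives the \emph{upper} bound $i(\alpha,\beta)\le 5|\chi(\Sigma)|$; there is no contradiction and your argument does not close. This is precisely the obstruction the paper's two-local-minimizers argument is designed to circumvent when only a single orthogonal intersection is available.

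One smaller remark: your diagonal limit $C_{\psi_{k(n)}^n}\to M_0$ is legitimate modulo the usual care with basepoints, and your control of injectivity radius and curvature via Theorems \ref{thm:cusprepelling} and \ref{thm:bounded_geometry_converge} is in order; that part of the setup is not where the problem lies.
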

\begin{proof}
Note than in $M_0$ we have two distinct copies of $\Sigma$, namely $\Sigma\times\lbrace\frac14\rbrace, \Sigma\times\lbrace\frac34\rbrace$ (differentiated by their cyclic order with $\alpha,\beta$), so we can obtain two area minimizers, one in each homotopy class \cite{FreedmanHassScott83}. 
By Theorem \ref{thm:minimizersurvives} of the Appendix, as $M_n \overset{geom}{\longrightarrow} M_0$, we have that for $n$ large enough we will find local area minimizers homotopic to the standard $\Sigma$. 
Since now they belong to the same homotopy class,  each of them has a point with a principal curvature strictly bigger than $1$. By Theorem \ref{thm:cusprepelling} one of these surfaces is the limit of the area minimizers in $M_n$, which implies the existence of an uniform lower bound $\mu>1$ for the largest principal curvature.
\end{proof}

\begin{remark}
In the previous result, we can also conclude that there is a uniform $\mu>1$ lower bound for the largest principal curvature along any sequence of minimal surfaces that converge to either local minimizer in $M_n$.
\end{remark}
\begin{remark}
Using Proposition \ref{prop:nodeepness}, we can guarantee the existence of two local area minimizers using the short curve $\beta$ as a barrier using shrinkwrapping \cite{CalegariGabai06}, as detailed in \cite{Coskunuzer21}.
\end{remark}

\section{Further discussion}\label{appendix}
We make some concluding remarks about existence of local area minimizing surfaces in some families of hyperbolic $3$-manifolds.  
%In particular, we provide examples of tame  hyperbolic $3$-manifolds with infinitely many homotopic incompressible local area minimizers that exit an end of the manifold.  These examples do not cover a finite volume manifold.
While in previous sections we have used information about a sequence $M_n \overset{geom}{\longrightarrow} M_\infty$ to conclude the existence of a minimal surface in the limit, here we include a result that uses a minimal surface in the limit to conclude existence along the sequence. While the arguments are known, we include this discussion for the sake of completeness.

\begin{theorem}\label{thm:minimizersurvives}
Let $M_n \overset{geom}{\longrightarrow} M_\infty$ be a geometrically convergent sequence of hyperbolic 3-manifolds. If $M_\infty$ has a closed embedded local minimizer of area $\Sigma$ that does not extend to a local foliation by minimal surfaces, then for $n$ sufficiently large, there exist local area-minimizers $\Sigma_n\subset M_n$ so that $\Sigma_n \rightarrow \Sigma$.
\end{theorem}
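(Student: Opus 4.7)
The approach is to construct mean-convex barrier surfaces on either side of $\Sigma \subset M_\infty$, transport them into $M_n$ via the near-isometric embeddings from geometric convergence, and then obtain $\Sigma_n$ by a constrained area minimization in the resulting region. First, I would argue that the non-foliation hypothesis upgrades stability of $\Sigma$ to \emph{strict} stability of the Jacobi operator $L = \Delta_\Sigma + |A|^2 - 2$. Since $\Sigma$ is a local area minimizer, $-L$ is positive semidefinite; if its first eigenvalue vanished, then the positive first eigenfunction $f$ would satisfy $Lf = 0$, and a classical implicit function theorem argument (in the spirit of Fischer-Colbrie--Schoen) produces a local foliation of a neighborhood of $\Sigma$ by minimal surfaces, contradicting our assumption. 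With $Lf < 0$ for the positive first eigenfunction $f$, the parallel family $\Sigma_t = \exp(tfN)$ satisfies $\tfrac{d}{dt}\big|_{t=0} H(\Sigma_t) = Lf < 0$, so for some small $t_0 > 0$ both $\Sigma_{\pm t_0}$ are strictly mean-convex toward $\Sigma$ and bound a mean-convex region $W \subset M_\infty$ containing $\Sigma$.

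Second, by the definition of geometric convergence based at a point of $\Sigma$, there exist $(1+\epsilon_n)$-bi-Lipschitz embeddings $\phi_n \colon U \hookrightarrow M_n$ of a neighborhood $U \supset \overline{W}$ with $\epsilon_n \to 0$.  Since strict mean convexity is a $C^2$-open condition, $W_n := \phi_n(W)$ is strictly mean-convex in $M_n$ for $n$ sufficiently large. Standard area minimization in mean-convex regions (e.g., Meeks--Simon--Yau) then yields a smooth embedded area minimizer $\Sigma_n \subset W_n$ in the isotopy class of $\phi_n(\Sigma)$; the maximum principle keeps $\Sigma_n$ in the interior of $W_n$, so $\Sigma_n$ is a local area minimizer in $M_n$.

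Third, I would show $\Sigma_n \to \Sigma$.  Since $\Area(\Sigma_n) \leq \Area(\phi_n(\Sigma)) \to \Area(\Sigma)$, and the injectivity radius of $\Sigma_n$ is uniformly bounded below (inherited from $W$ via $\phi_n$), curvature estimates for stable minimal surfaces combined with Theorem \ref{thm:bounded_geometry_converge} extract a smooth subsequential limit $\Sigma' \subset W$, which is a stable minimal surface of area at most $\Area(\Sigma)$ in the isotopy class of $\Sigma$. By choosing $t_0$ small at the outset so that $W$ is a thin tubular neighborhood of $\Sigma$, strict stability together with the implicit function theorem guarantees that $\Sigma$ is the unique such minimal surface in $W$, so $\Sigma' = \Sigma$ and the full sequence converges.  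The main obstacle is the passage from non-foliation to strict stability: one must show that a positive Jacobi field on a compact stable minimal surface integrates to a local minimal foliation, which requires a Lyapunov--Schmidt analysis of the minimal surface operator at $\Sigma$ and uses the positivity of the null eigenfunction in an essential way.
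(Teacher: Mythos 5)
Your overall strategy matches the paper's: construct a mean-convex barrier region around $\Sigma$, transport it into $M_n$ using the near-isometric maps from geometric convergence, minimize inside, and pass to the limit. The difference, and the gap, is in how you produce the barriers.

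The step ``non-foliation $\Rightarrow$ strict stability'' is not correct. If $\lambda_1(L)=0$, the positive first eigenfunction $f$ gives a Jacobi field, but a Jacobi field does \emph{not} in general integrate to a local foliation by minimal surfaces. The implicit function theorem fails exactly here because $L$ has a kernel; a Lyapunov--Schmidt reduction replaces the minimal surface equation by a scalar reduced equation $\Phi(t)=0$, and $\Phi$ need not vanish identically. For instance, after reduction one can have $\Phi(t)\sim t^3$ (so $\Sigma$ is an isolated minimal surface and a strict local minimizer) while $\lambda_1=0$; no foliation by minimal surfaces exists, yet your contradiction never materializes. Consequently you cannot conclude $Lf<0$, and the construction $\Sigma_{\pm t_0}=\exp(\pm t_0 fN)$ need not be strictly mean-convex toward $\Sigma$: when $Lf=0$ the mean curvature of $\Sigma_t$ is $O(t^2)$ and its sign is governed by higher-order terms that your argument does not control. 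You flag this as ``the main obstacle,'' but as stated it is not merely a technical point to be filled in --- the implication you need is false.

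The paper avoids this by not working with minimal foliations or eigenvalues at all. It invokes the structural result of Bray--Brendle--Neves and Song: near any closed embedded minimal surface there is a foliation by constant-mean-curvature leaves, and on each side the leaves are uniformly strictly mean-convex, uniformly strictly mean-concave, or all minimal. (This CMC foliation always exists, degenerate or not, because one applies the implicit function theorem to a normalized operator whose linearization is invertible.) The paper then rules out the mean-convex case because $\Sigma$ is a local area minimizer, and rules out the all-minimal case by the hypothesis; what remains is mean-concavity on both sides, which supplies exactly the barriers $\Sigma^{\pm}$ you wanted. If you replace your strict-stability step with this CMC-foliation trichotomy, the rest of your argument (transport by $\phi_n$, Meeks--Simon--Yau minimization in the mean-convex region, area bound plus curvature estimates to extract a limit, shrinking the tube and diagonalizing) goes through and recovers the paper's proof.
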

\begin{proof}
We know by \cite[Proposition 3.2]{BrayBrendleNeves10} and \cite[Lemma 10]{Song18} that in a neighbourhood of $\Sigma$, we have a foliation such that, on each side of $\Sigma$, is either strictly mean convex, strictly mean concave, or is foliated by minimal surfaces. By a result of Anderson (see \cite[Theorem 5.5]{Anderson83}) we can further assume that $\Sigma$ is the only minimal leaf. Because $\Sigma$ is a local minimizer of area,  neither side can be strictly mean convex. Hence we can consider $\Sigma^\pm$, homotopic surfaces on each side of $\Sigma$, whose mean curvature vectors point strictly towards $\Sigma$. By geometric convergence, $M_N$ contains regions exiting the end which are $C^2$ close to the region $U$ limited onto by $\Sigma^\pm$, for large $n$. Hence eventually these regions are mean convex, so we can find area minimizers by minimizing area along surfaces contained in those regions. These are the local area minimizers $\Sigma_n\subset M_n$, which we can make converge to $\Sigma$ by taking $\Sigma^\pm$ closer to $\Sigma$ and a standard diagonal argument.
\end{proof}

%\begin{theorem}
%Let $M$ be a bounded geometry hyperbolic end. If $M$ is asymptotically periodic and the periodic geometric limit does not foliate by compact minimal surfaces, then $M$ contains infinitely many local area minimizer minimal surfaces exiting the end.
%\end{theorem}
%\begin{proof}
%Denote by $N=S\times\mathbb{R}$ the periodic geometric limit. Taking $\Sigma$ an area minimizer in the homotopy class of $S$, we know by \cite[Proposition 3.2]{BrayBrendleNeves10}, \cite[Lemma 10]{Song18} that in a neighbourhood of $\Sigma$ we have a foliation such that each side is either mean convex, mean concave or foliates by minimal surfaces. Since $\Sigma$ is an area minimizer, we know that it can't be mean concave, while by assumption neither of these neighbourhood can't foliate (see \cite[Theorem 5.5]{Anderson83}). Hence we can consider $\Sigma^\pm$ homotopic surfaces at each side of $\Sigma$ so that their mean curvature vector points strictly towards $\Sigma$. By geometric convergence, $M$ contains regions exiting the end which are $C^2$ close to the region $U$ limited by $\Sigma^\pm$. Hence eventually these regions are mean convex, so we can find an area minimizers by minimizing area along surfaces contained in those regions. These are the local area minimizers in $M$ that exit the end.
%\end{proof}
We say that an end $E$ of a complete hyperbolic $3$-manifold $M$ is \emph{asymptotically periodic} if for any sequence of points $p_n$ exiting $E$, the geometric limit of $M$ based at $p_n$ converges to the cyclic cover of a hyperbolic mapping cylinder corresponding to the fiber.

We can apply Theorem \ref{thm:minimizersurvives} to conclude
\begin{cor}
Any complete hyperbolic $3$-manifold $M$ with an asymptotically periodic end $E$ has an infinite sequence of pairwise homotopic, $\pi_1$-injective local area minimizing surfaces exiting $E$, as long as the periodic manifold to which the end $E$ converges is not foliated by minimal surfaces exiting the ends.
\end{cor}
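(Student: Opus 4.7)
The strategy is to apply Theorem \ref{thm:minimizersurvives} to the based geometric convergence $(M,p_n)\overset{geom}{\longrightarrow}(N,q)$, where $\{p_n\}\subset E$ exits the end and $N$ is the cyclic cover of a hyperbolic mapping torus $C_\psi$ to which $E$ is asymptotic. First, I would produce a closed embedded $\pi_1$-injective local area minimizer $\Sigma^*\subset N$ representing the fiber class by lifting to $N$ a least area representative of the fiber homotopy class in $C_\psi$, which exists by \cite{FreedmanHassScott83}. The deck group $\ZZ=\langle T\rangle$ of $N\to C_\psi$ then produces a $\ZZ$-equivariant family $\{T^k\Sigma^*\}$ of such local minimizers, each separating $N$ into the two ends.

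Next, I would use the hypothesis to extract one translate $T^k\Sigma^*$ that does \emph{not} extend to a local foliation by minimal surfaces, so that Theorem \ref{thm:minimizersurvives} applies. Suppose for contradiction that every $T^k\Sigma^*$ did extend. By the trichotomy of \cite[Proposition 3.2]{BrayBrendleNeves10} and \cite[Lemma 10]{Song18} near each leaf, one obtains local foliations $\mathcal F_k$ by minimal surfaces near $T^k\Sigma^*$. Uniqueness of minimal foliations through a smooth minimal leaf, combined with the $\ZZ$-equivariance $T\mathcal F_k=\mathcal F_{k+1}$, would let us analytically continue $\mathcal F_0$ along its leaf parameter: each new boundary leaf encountered, being itself a local minimizer by the foliation trichotomy, admits a further extension by assumption. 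Iterating and pushing forward by translates would yield a global minimal foliation of $N$ whose leaves exit both ends, contradicting the hypothesis.

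With a strict local minimizer $\Sigma^*$ in hand, Theorem \ref{thm:minimizersurvives} applied to $(M,p_n)\overset{geom}{\longrightarrow}(N,q)$ produces, for $n$ sufficiently large, a closed embedded local area minimizer $\Sigma_n\subset M$ near $p_n$ converging smoothly to $\Sigma^*$. Since $p_n$ exits $E$, so does $\Sigma_n$; $\pi_1$-injectivity of $\Sigma^*$ transfers to $\Sigma_n$ for large $n$ by $C^\infty$-closeness; and pairwise homotopy follows because asymptotic periodicity forces $E$ to be eventually a product neighborhood $\Sigma\times[0,\infty)$, in which every $\Sigma_n$ represents the fiber class.

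The main obstacle I expect is the patching argument of the second step: turning the global hypothesis ``$N$ is not foliated by minimal surfaces exiting the ends'' into the pointwise statement ``some $T^k\Sigma^*$ is a strict local minimizer'' requires carefully ruling out failures of the analytic continuation such as shrinking extension radius or non-smooth limits of foliation leaves. The $\ZZ$-equivariance is essential here: it provides uniform behavior of the local foliations across fundamental domains of the cyclic cover, so that unless the continuation encounters a genuine strict local minimizer (which is what we want), the foliation extends globally to contradict the hypothesis.
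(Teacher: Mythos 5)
The paper offers no proof of this corollary beyond the phrase ``We can apply Theorem \ref{thm:minimizersurvives},'' so there is no explicit argument to compare against; your proposal is a reconstruction of the evidently intended route and follows it: take $p_n$ exiting $E$, use asymptotic periodicity to get $(M,p_n)\overset{geom}{\longrightarrow}(N,q)$ with $N$ the infinite cyclic cover of a fibered manifold, produce a local minimizer $\Sigma^*\subset N$ by lifting a least area fiber from \cite{FreedmanHassScott83}, and invoke Theorem \ref{thm:minimizersurvives}. Your deductions that the resulting $\Sigma_n$ exit $E$ and are pairwise homotopic (via the product structure of the end) are correct and are the kind of detail the paper leaves to the reader.

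The genuine content, which you correctly identify, is the passage from the corollary's global hypothesis (``$N$ is not foliated by minimal surfaces exiting the ends'') to the theorem's pointwise one (``some local minimizer does not extend to a local foliation''). Your continuation argument is the right idea, and the $\ZZ$-equivariance does supply the uniform geometry needed for the compactness (closedness) half of the open--closed argument; note that it is really the \emph{periodicity} of $N$, not equivariance of the foliations per se, doing the work: it gives a uniform lower bound on injectivity radius and hence uniform curvature bounds on stable minimal leaves by \cite{SchoenSimonYau75}, so a sequence of leaves subconverges. One wrinkle you should address explicitly: the proof of Theorem \ref{thm:minimizersurvives} uses strict mean concavity on \emph{both} sides of the chosen $\Sigma$ to build the barriers $\Sigma^\pm$, and a boundary leaf $\Sigma_{t_+}$ of a maximally continued foliation fails this on the near side (it is a smooth limit of minimal leaves there). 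The fix is the one already present in that proof: invoke \cite[Theorem 5.5]{Anderson83} to pass to a nearby local minimizer that is the unique minimal leaf in a product neighborhood, hence strictly mean concave on both sides, or else conclude the foliation in fact continues and the parameter interval was not maximal. With that adjustment your argument closes the gap, and I would regard it as a correct and rather more careful version of what the paper asserts without proof.
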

%We can apply Theorem \ref{thm:minimizersurvives} to conclude that if we have an asymptotically periodic geometrically infinite hyperbolic end that does not converge to a space that is foliated by minimal surfaces, then such an end has infinitely many local area minimizers exiting it.
\begin{remark}
There exist non-periodic complete hyperbolic $3$-manifolds with an asymptotically periodic end.  Such examples can be obtained, using Thurston's Double Limit Theorem, as limits of quasi-Fuchsian manifolds with conformal end invariants $(X_n, Y_n)$ with $X_n$ staying in a compact subset of the Teichm\"uller space and $Y_n$ converging in Thurston's compactifictaion to the projective class of a measured lamination left invariant by some pseudo-Anosov mapping $\psi$.  
Such a limit has one geometrically finite end, and one geometrically infinite end with a ray of local area minimizing homotopy equivalences that exit the end.
\end{remark}
\bibliographystyle{amsalpha}
\bibliography{mybib}
\Addresses
\end{document}